\documentclass[]{amsart}
\usepackage{times}
\usepackage{amsfonts}
\usepackage{amssymb}
\usepackage{bbm}
\usepackage{times}
\usepackage{amssymb}
\usepackage{amscd}
\usepackage{graphicx}

\usepackage{amsmath}
\usepackage{amssymb}

\usepackage{amsmath}
\usepackage{amsfonts}
\usepackage{amscd}
\usepackage{latexsym}
\usepackage{amsthm}
\usepackage{amssymb}
\usepackage{amsmath}

\theoremstyle{plain}
\newtheorem{theorem}{Theorem}[section]
\newtheorem{corollary}[theorem]{Corollary}
\newtheorem{lemma}[theorem]{Lemma}

\newtheorem{definition}[theorem]{Definition}

\begin{document}
\title[asymptotically tracially  approximation ${\rm C^*}$-algebras]{{\bf comparison  properties  of asymptotically tracially  approximation ${\rm C^*}$-algebras}}
\author{Qingzhai Fan and  Xiaochun Fang}

\address{ Qingzhai Fan\\ Department of Mathematics\\  Shanghai Maritime University\\
Shanghai\\China
\\  201306 }
\email{qzfan@shmtu.edu.cn}

\address{Xiaochun Fang\\ Department of Mathematics\\ Tongji
University\\
Shanghai\\China
\\  200092 }
\email{xfang@mail.tongji.edu.cn}

\thanks{{\bf Key words}  ${\rm C^*}$-algebras, asymptotically tracially  approximation,  Cuntz semigroup.}

\thanks{2000 \emph{Mathematics Subject Classification. } 46L35, 46L05, 46L80}
\begin{abstract} We show that  the following  properties  of the
  ${\rm C^*}$-algebras in a class $\mathcal{P}$ are inherited by simple unital ${\rm C^*}$-algebras in the
 class of asymptotically tracially in $\mathcal{P}$: $(1)$  $\beta$-comparison (in the sense of Kirchberg and R{\o}rdam),  $(2)$ $n$-comparison (in the sense of Winter).
 \end{abstract}
\maketitle
\section{Introduction}
      Tracial topological rank no more than $k$ was introduced by Lin in \cite{L1}. Instead of assuming inductive limit structure, he started with a
  certain abstract tracial approximation property, and 
  ${\rm C^*}$-algebras with tracial topological rank no more than one and
  certain additional properties are AH algebras without dimension
  growth  which was classified by Elliott-Gong in \cite{E6} (in the real rank zero case) and classified by Elliott-Gong-Li in \cite{GL}.

  This abstract tracial approximation structure has proved to be very important in the classification of simple amenable ${\rm C^*}$-algebras. For example,
   it led to the classification of unital simple separable amenable  ${\rm C^*}$-algebras with finite nuclear dimension in the UCT class (see \cite{G4}, \cite{EZ5}, \cite{TWW1}).

Examples of R{\o}rdam \cite{RR10} and Toms \cite{TT1}, relying on techniques pioneered by Villadsen \cite{V}, demonstrated that some sort of regularity condition, stronger than nuclearity, is necessary in order to have a classification by $K$-theory and trace. Three regularity conditions have emerged: finite nuclear dimension, tensorial absorption of the Jiang-Su algebra $\mathcal{Z},$ and algebraic regularity in the Cuntz semigroup.
Toms and  Winter have conjectured  (see e.g. \cite{ET}) that these three fundamental properties are equivalent for all separable, simple, unital, amenable ${\rm C^*}$-algebras.

 Kirchberg and R{\o}rdam introduced a weaker comparison property and also a  property  of  a ${\rm C^*}$-algebra called $\beta$-comparison  in  \cite{K22}. The  property of  $n$-comparison  was introduced by Winter in
 \cite{WW23}.

It is an open problem if  Kirchberg's and R{\o}rdam's weak comparison and $\beta$-comparison, Winter's $m$-comparison, and strict comparison all agree for simple  unital ${\rm C^*}$-algebras. (Somewhat confusingly, it is  known that $m$-comparison for a particular $m$ does agree with  $\beta$-comparison for $\beta=m+1$.)

In order to search a tracial version of Toms-Winter conjecture, also inspire by tracial $\mathcal{Z}$-absorbing ${\rm C^*}$-algebras which was introduced by Hirshberg and Orovitz in \cite{HO},  Fu and Lin  introduced a class of asymptotically tracially  approximation ${\rm C^*}$-algebras in \cite{FL}.

The following  definition is not  exactly  same as  definition 3.1 in \cite{FL}, but by proposition 3.8 in \cite{FL}, the following definition is equivalent to the  definition 3.1 in \cite{FL}.

Let  $\mathcal{P}$ be a class of unital ${\rm C^*}$-algebras,   the class of simple unital  ${\rm C^*}$-algebras which can be  asymptotically tracially in $\mathcal{P}$, denoted by ${\rm ATA}\mathcal{P}$.

\begin{definition}(\cite{FL}.)\label{def:2.5}  A  simple unital ${\rm C^*}$-algebra $A$ is said to belong to the class ${\rm ATA}\mathcal{P}$ if, for any
 $\varepsilon>0,$ any finite
subset $F\subseteq A,$ and any  non-zero element $a\geq 0,$ there exist
a ${\rm C^*}$-algebra $B$  in $\mathcal{P}$ and completely positive  contractive linear maps  $\alpha:A\to B$ and  $\beta_n: B\to A$, and $\gamma_n:A\to A\cap\beta_n(B)^{\perp}$ such that

$(1)$ the map $\alpha$ is unital  completely positive   linear map,  $\beta_n(1_B)$ and $\gamma_n(1_A)$ are projections and  $\beta_n(1_B)+\gamma_n(1_A)=1_A$ for all $n\in \mathbb{N}$,

$(2)$ $\|x-\gamma_n(x)-\beta_n(\alpha(x))\|<\varepsilon$ for all $x\in F$ and for all $n\in {\mathbb{N}}$,

$(3)$ $\alpha$ is an $F$-$\varepsilon$ approximate embedding,

$(4)$ $\lim_{n\to \infty}\|\beta_n(xy)-\beta_n(x)\beta_n(y)\|=0$ and $\lim_{n\to \infty}\|\beta_n(xy)\|=\|x\|$ for all $x,y\in B$, and

$(5)$ $\gamma_n(1)\lesssim a$ for all $n\in \mathbb{N}$.
\end{definition}

 In \cite{FL}, Fu and Lin show that  the following  properties  of unital
${\rm C^*}$-algebras in a class $\mathcal{P}$ are inherited by simple unital ${\rm C^*}$-algebras in the  class  ${\rm ATA}\mathcal{P}$:
  $(1)$ stably finite,
   $(2)$ quasidiagonal ${\rm C^*}$-algebras,
    $(3)$ purely infinite simple ${\rm C^*}$-algebras, $(4)$ tracial $\mathcal{Z}$-absorption,    $(5)$ the Cuntz semigroup is almost unperforated, and   $(6)$ strict comparison property.

In this paper, we show that  the following comparison  properties  of unital
${\rm C^*}$-algebras in a class $\mathcal{P}$ are inherited by simple unital ${\rm C^*}$-algebras in  the class of asymptotically tracially in $\mathcal{P}$:
  $(1)$ $\beta$-comparison ( in the sense of Kirchberg and R{\o}rdam; see \cite{K22}),
   $(2)$ $n$-comparison (in the sense of Winter; see \cite{WW23}).

\section{Definitions and preliminaries}

 Let  ${\rm M}_{n}(A)_+$ denote
the positive elements of  ${\rm M}_{n}(A)$.  Given $a, b\in {\rm M}_{n}(A)_+,$
we say that $a$ is Cuntz subequivalent to $b$ (written $a\lesssim b$) if there is a sequence $(v_n)_{n=1}^\infty$
of elements of ${\rm M}_{n}(A)$ such that $$\lim_{n\to \infty}\|v_nbv_n^*-a\|=0.$$
We say that $a$ and $b$ are Cuntz equivalent (written $a\sim b$) if $a\lesssim b$ and $b\lesssim a$. We write $\langle a\rangle$ for the equivalence class of $a$.

The object ${\rm W}(A):={\rm M}_{\infty}(A)_+/\sim$
 will be called the Cuntz semigroup of $A$. (See \cite{CEI}.)   ${\rm W}(A)$ becomes  an ordered  semigroup   when equipped with the addition operation
$$\langle a\rangle+\langle b\rangle=\langle a \oplus b\rangle,$$
 and the order relation
$$\langle a\rangle\leq \langle b\rangle\Leftrightarrow a\lesssim b.$$

Given $a$ in ${\rm M}_{\infty}(A)_+$ and $\varepsilon>0,$ we denote by $(a-\varepsilon)_+$ the element of ${\rm C^*}(a)$ corresponding (via the functional calculus) to the function $f(t)={\max (0, t-\varepsilon)},  ~ t\in \sigma(a)$.

We shall say that a separable exact ${\rm C^*}$-algebra $A$ has strict comparison if for $a, ~b\in {\rm M}_k(A)_+,$ with $d_\tau(a)<d_\tau(b)$ for any $\tau\in {\rm T}(A),$ then we have $a\lesssim b$, where ${\rm T}(A)$ is  the set of tracial states of $A$.

\begin{definition} (\cite{WW23}) Let $A$ be a unital ${\rm C^*}$-algebra. We say $A$  has $n$-comparison,  if,  whenever  $x,~y_0,~y_1, $ $~y_2,$
$\cdots,~y_n$ are elements
 in ${\rm W}(A)$ such that $ x<_s  y_j$ for all $j=0,~ 1,~ \cdots, ~ n,$ then  $x\leq y_0+y_1+\cdots +y_n$. Here, $ x<_s  y$ means  $(k+1)x\leq ky$
 for some natural number $k$.
\end{definition}

\begin{definition} (\cite{K22}) Let $A$ be a unital ${\rm C^*}$-algebra and let $1\leq \beta < \infty$. We say that $A$ has
$\beta$-comparison if for all $x,  y\in $ ${\rm W}(A)$ and all integers $k,~l\geq 1$ with $k>\beta l,$ the inequality $kx\leq ly$ implies $x\leq y$.

It follows immediately from the definitions that ${\rm W}(A)$ is almost unperforated if and only if $A$ has 1-comparison in the sense of Kirchberg and R{\o}rdam.
\end{definition}

\begin{theorem}(\cite{PPT}, \cite{HO}.) \label{thm:2.1} Let $A$ be a stably finite ${\rm C^*}$-algebra.

 $(1)$ Let $a,~ b\in A_+$ and   $\delta>0$  be such that
$\|a-b\|<\delta$. Then we have $(a-\delta)_+\lesssim b$.

$(2)$ Let $a$ be a purely positive element of $A$ (i.e., $a$ is
not Cuntz equivalent to a projection). Let  $\eta>0$,  and let $f\in C_0(0,1]$ be a non-negative function with $f=0$ on $(\eta,1),$  $f>0$ on $(0,\eta)$,
and $\|f\|=1$.  We have $f(a)\neq 0$
and  $(a-\eta)_++f(a)\lesssim a.$

$(3)$ The following conditions are equivalent: $(1)'$ $a\lesssim b,$ $(2)'$ for any $\delta>0,$ $(a-\delta)_+\lesssim b,$ and  $(3)'$ for any $\delta>0$,  there is $\varepsilon>0,$ such that $(a-\delta)_+\lesssim (b-\varepsilon)_+.$
 \end{theorem}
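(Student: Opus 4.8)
The plan is to derive all three parts from a single algebraic observation, namely that order-domination passes to Cuntz subequivalence: if $x=x^*$ and $y\ge 0$ satisfy $x\le y$, then $x_+\lesssim y$. To see this, write $x=x_+-x_-$ with $x_+x_-=0$; from $x\le y$ one gets $x_+\le y+x_-$, and conjugating by $x_+^{1/2}$ (which annihilates $x_-$) yields $x_+^2\le x_+^{1/2}yx_+^{1/2}\lesssim y$, the last step because $x_+^{1/2}yx_+^{1/2}=dyd^*$ with $d=x_+^{1/2}$; since $x_+^2\sim x_+$ and order-domination among positive elements already implies Cuntz subequivalence (via conjugation by $x_+^{1/2}(y+1/m)^{-1/2}$), we conclude $x_+\lesssim y$. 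I will also use two standard functional-calculus facts repeatedly: monotonicity of cut-downs, $(a-\delta_1)_+\le (a-\delta_2)_+$ whenever $\delta_1\ge\delta_2$, and the identity $((t-s)_+-u)_+=(t-s-u)_+$ for $s,u\ge 0$.

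For $(1)$, set $\delta_0=\|a-b\|<\delta$. Passing to the unitization, $a-b\le \|a-b\|\,1=\delta_0 1$, so $x:=a-\delta_0 1\le b$. The sub-lemma gives $(a-\delta_0)_+=x_+\lesssim b$, and monotonicity yields $(a-\delta)_+\le (a-\delta_0)_+$, whence $(a-\delta)_+\lesssim b$. For $(2)$, first note that $a$ not being Cuntz equivalent to a projection forces $0$ to be a non-isolated point of $\sigma(a)$ (otherwise a spectral projection would witness such an equivalence), so $\sigma(a)$ meets $(0,\eta)$ and, since $f>0$ there, $f(a)\ne 0$. Both summands are of the form $g_i(a)$ with $g_i\in C_0(0,1]_+$, so their sum equals $g(a)$ for $g=(\,\cdot\,-\eta)_++f\in C_0(0,1]_+$ with $g(0)=0$; any such $g(a)$ satisfies $g(a)\lesssim a$ because each cut-down factors as $(g-\varepsilon)_+=t\,h(t)$ with $h\ge 0$ continuous, giving $(g-\varepsilon)_+(a)=h(a)^{1/2}a\,h(a)^{1/2}\lesssim a$, and letting $\varepsilon\to 0$ one applies $(3)$. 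This is exactly $(a-\eta)_++f(a)\lesssim a$.

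For $(3)$, the implication $(3)'\Rightarrow(2)'$ is immediate from $(b-\varepsilon)_+\lesssim b$. For $(1)'\Rightarrow(2)'$: if $v_nbv_n^*\to a$, then for large $n$ we have $\|v_nbv_n^*-a\|<\delta$, and $(1)$ gives $(a-\delta)_+\lesssim v_nbv_n^*\lesssim b$. For $(2)'\Rightarrow(1)'$: choosing $w_n$ with $\|w_nbw_n^*-(a-1/n)_+\|<1/n$ and using $\|(a-1/n)_+-a\|\le 1/n$ shows $w_nbw_n^*\to a$, so $a\lesssim b$. The substantive step is $(2)'\Rightarrow(3)'$: given $\delta$, apply $(2)'$ to obtain $r$ with $\sigma:=\|rbr^*-(a-\delta/2)_+\|<\delta/2$; since $r(b-\varepsilon)_+r^*\to rbr^*$ as $\varepsilon\to 0^+$, pick $\varepsilon$ with $\|r(b-\varepsilon)_+r^*-rbr^*\|<\delta/2-\sigma$, so that $\|r(b-\varepsilon)_+r^*-(a-\delta/2)_+\|<\delta/2$; then $(1)$ together with the identity above gives $(a-\delta)_+=((a-\delta/2)_+-\delta/2)_+\lesssim r(b-\varepsilon)_+r^*\lesssim (b-\varepsilon)_+$.

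I expect the main obstacle to be bookkeeping rather than anything conceptual: keeping the cut-down parameters aligned so that the nested identity $((a-\delta/2)_+-\delta/2)_+=(a-\delta)_+$ lands exactly on the prescribed $\delta$, and choosing $\varepsilon$ in $(2)'\Rightarrow(3)'$ small enough that the norm estimate survives after conjugation by $r$. The only genuinely algebraic input is the sub-lemma $x\le y\Rightarrow x_+\lesssim y$, whose proof hinges on the orthogonality $x_+x_-=0$; everything else is continuity of the functional calculus and monotonicity of the $\delta$-cut-downs.
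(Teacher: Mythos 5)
The paper never proves Theorem 2.1: it is imported as background with citations to \cite{PPT} and \cite{HO}, so there is no internal proof to compare yours against; judged on its own, your argument is correct and is essentially the standard proof from that literature. Your sub-lemma ($x=x^*\le y$ with $y\ge 0$ implies $x_+\lesssim y$, via $x_+^2\le x_+^{1/2}yx_+^{1/2}$ and orthogonality of $x_+,x_-$) is exactly the Kirchberg--R{\o}rdam lemma from which part $(1)$ is usually derived, and your treatment of $(3)$ through the identity $((a-\delta/2)_+-\delta/2)_+=(a-\delta)_+$ is the standard route. Two small points deserve explicit attention. First, in part $(1)$ you work in the unitization to get $a-\delta_0 1\le b$, so the sub-lemma produces witnesses in $\widetilde{A}$; you should say a word about why Cuntz subequivalence between two elements of $A$ computed in $\widetilde{A}$ agrees with that computed in $A$ (for instance, replace a witness $v$ by $e_\lambda v$ for an approximate unit $(e_\lambda)$ of $A$, using that $vbv^*\in A$). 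Second, in part $(2)$ your observation that $g(a)\lesssim a$ for every $g\in C_0(0,1]_+$ does prove the literal statement, and it correctly isolates that the only role of pure positivity is to force $f(a)\neq 0$; but you should also record that $f$ vanishes on $(\eta,1]$ while $(\,\cdot\,-\eta)_+$ vanishes on $[0,\eta]$, so the two summands are orthogonal. That orthogonality is what upgrades your in-algebra inequality to the Cuntz-semigroup inequality $\langle (a-\eta)_+\rangle+\langle f(a)\rangle\le\langle a\rangle$, and it is in this form (a non-zero element $d$ orthogonal to the cut-down with $(b-\delta/2)_++d\lesssim b$) that the paper actually invokes the result in Theorems 3.1 and 3.2. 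With those two remarks added, your proof is complete; as a by-product it shows the stable finiteness hypothesis in the statement is not actually used in any of the three parts.
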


\begin{lemma}(\cite{FL}.)\label{lem:2.6} If the class $\mathcal{P}$ is closed under tensoring with matrix algebras and
 under passing to  unital
hereditary ${\rm C^*}$-subalgebras, then  the class ${\rm ATA}\mathcal{P}$ is closed under
tensoring with matrix algebras  and under passing to unital hereditary ${\rm C^*}$-subalgebras.
\end{lemma}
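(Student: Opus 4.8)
The plan is to verify both permanence properties directly from Definition~\ref{def:2.5}, transporting the approximating data of $A$ along the relevant construction; I treat the two operations in turn. For tensoring with matrices, suppose $A\in{\rm ATA}\mathcal{P}$ and fix $k$, and let $\varepsilon>0$, a finite $F\subseteq M_k(A)$, and a nonzero $a\ge0$ be given. Using simplicity of $A$ I would first choose a nonzero $a'\in A_+$ with $k\langle a'\rangle\le\langle a\rangle$; such an $a'$ exists by a standard fact (in a simple ${\rm C^*}$-algebra the hereditary subalgebra generated by any nonzero positive element contains, for each $k$, $k$ mutually orthogonal nonzero positive elements that are mutually Cuntz equivalent). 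Apply Definition~\ref{def:2.5} to $A$ with tolerance $\varepsilon/k^2$, the finite set $F'$ of matrix entries of elements of $F$, and the element $a'$, obtaining $B\in\mathcal{P}$ and maps $\alpha,\beta_n,\gamma_n$; then set $B'=M_k(B)\in\mathcal{P}$ (closure under matrices) and amplify, $\alpha^{(k)}=\alpha\otimes{\rm id}_{M_k}$, and similarly for $\beta_n,\gamma_n$. Conditions (1)--(4) pass to the amplifications by entrywise estimates, the $k^2$ factor absorbing the error and the range orthogonality being exact because $\gamma_n(A)\beta_n(B)=0$ entrywise; condition (5) holds since $\gamma_n^{(k)}(1)=1_k\otimes\gamma_n(1_A)$ gives $\langle\gamma_n^{(k)}(1)\rangle=k\langle\gamma_n(1_A)\rangle\le k\langle a'\rangle\le\langle a\rangle$.

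For a unital hereditary subalgebra $C=pAp$, with $p=1_C$ a projection, let $\varepsilon>0$, a finite $F\subseteq C$, and a nonzero $c\ge0$ in $C$ be given. I would apply Definition~\ref{def:2.5} to $A$ with a small tolerance $\varepsilon'$, the enlarged finite set $F\cup\{p\}$, and the element $c$. Since $\alpha$ is an $(F\cup\{p\})$-$\varepsilon'$ approximate embedding and $p^2=p$, the element $\alpha(p)$ is approximately a projection, so $q:=\chi_{(1/2,1]}(\alpha(p))$ is a genuine projection in $B$ with $\|q-\alpha(p)\|\le\kappa=\kappa(\varepsilon')$; set $B'=qBq\in\mathcal{P}$ (closure under unital hereditary subalgebras). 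The natural candidates for the transported maps are the compressions $x\mapsto q\alpha(x)q$, $y\mapsto p\beta_n(y)p$, and $x\mapsto p\gamma_n(x)p$.

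The crux is producing from these compressions the \emph{exact} projections required by condition (1). Writing $r=p\beta_n(q)p$ and $s=p\gamma_n(p)p$ in $C$, I would exploit two facts that hold \emph{uniformly in} $n$. First, $\gamma_n$ maps into $A\cap\beta_n(B)^{\perp}\subseteq f_nAf_n$, where $f_n=\gamma_n(1_A)$, so that $\gamma_n(p)\beta_n(q)=\beta_n(q)\gamma_n(p)=0$ exactly. Second, condition (2) applied to $p$, together with $\|\beta_n(\alpha(p))-\beta_n(q)\|\le\kappa$, gives $\|p-\gamma_n(p)-\beta_n(q)\|\le\varepsilon'+\kappa=:\delta'$ for all $n$. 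These yield $\|rs\|\le\delta'$ and $\|r+s-p\|\le\delta'$, whence $\|r^2-r\|=\|r(r+s-p)-rs\|\le2\delta'$ (and likewise $\|s^2-s\|\le2\delta'$) for all $n$. Choosing $\varepsilon'$ and $\kappa$ small makes $r$ a uniform approximate projection, so $R:=\chi_{(1/2,1]}(r)$ is a genuine projection with $\|R-r\|$ small, and $S:=p-R$ is its orthogonal complement in $C$, again for all $n$. Correcting each compression in its corner by the invertible image of the unit --- for instance replacing $y\mapsto R\,p\beta_n(y)p\,R$ by $b^{-1/2}(\cdot)b^{-1/2}$ with $b=R\,p\beta_n(q)p\,R$, and similarly in $SCS$ and $qBq$ --- produces completely positive contractive maps $\beta_n',\gamma_n',\alpha'$ that are exactly unital, respectively projection-valued, at the unit, with $\beta_n'(1_{B'})=R$, $\gamma_n'(1_C)=S$, $R+S=p$, and $\gamma_n'(C)\subseteq SCS\subseteq C\cap\beta_n'(qBq)^{\perp}$ since $RCR\perp SCS$.

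The main obstacle is precisely this passage from approximate to exact relations in condition (1), and securing it uniformly in $n$ rather than merely asymptotically; it is resolved by the uniform bound $\|r^2-r\|\le2\delta'$ above, which itself rests on the exact orthogonality of the ranges of $\gamma_n$ and $\beta_n$. With the corrected maps in hand, conditions (2)--(4) for $C$ follow by compressing the corresponding estimates for $A$ by $p$ (using $pxp=x$ for $x\in C$) and absorbing the $O(\delta')$ correction errors, condition (4) being inherited as a genuine limit from condition (4) for $A$. Finally, condition (5) follows from $S\approx s=p\gamma_n(p)p\lesssim p\,\gamma_n(1_A)\,p\lesssim\gamma_n(1_A)\lesssim c$ together with Theorem~\ref{thm:2.1}(1), which converts $\|S-s\|$ small into $S\lesssim s$. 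Passing to a tail of the sequence if needed (harmless, as it preserves the limit condition (4)) then exhibits $C\in{\rm ATA}\mathcal{P}$, completing the proof.
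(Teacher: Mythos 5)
The paper itself offers no proof of this lemma (it is quoted from \cite{FL}), so I can only judge your argument against Definition~\ref{def:2.5}. Your matrix-amplification half is essentially correct and standard (two small caveats: the fact you invoke about $k$ pairwise orthogonal, pairwise Cuntz-equivalent positive elements fails when $A$ is elementary, so that degenerate case needs a separate word; and asymptotic isometry of $\beta_n\otimes\mathrm{id}_{M_k}$ does not follow from entrywise estimates alone, though it is a standard consequence of asymptotic multiplicativity). The hereditary half, however, has a genuine gap, and it sits exactly where you place the crux. Condition (4) is an \emph{exact} limit statement: $\lim_n\|\beta_n'(xy)-\beta_n'(x)\beta_n'(y)\|=0$ for all $x,y\in qBq$. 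Every correction you perform is controlled by $\delta'=\varepsilon'+\kappa$, which is uniform in $n$ but does \emph{not} decay as $n\to\infty$: condition (2) only gives $\|p-\gamma_n(p)-\beta_n(\alpha(p))\|<\varepsilon'$ for each $n$, with no improvement for large $n$, and $\|q-\alpha(p)\|\le\kappa$ is a constant. Hence $\|r_n^2-r_n\|\le 2\delta'$, $\|R_n-r_n\|=O(\delta')$ and $\|b_n^{-1}-R_n\|=O(\delta')$ for \emph{every} $n$, and when one expands $\beta_n'(x)\beta_n'(y)-\beta_n'(xy)$ one picks up terms such as $R_n\beta_n(x)\bigl(b_n^{-1}-R_n\bigr)\beta_n(y)R_n$ and $R_n\beta_n(x)\bigl(R_n-r_n\bigr)\beta_n(y)R_n$, whose norms are $O(\delta')+o_n(1)$ but not $o_n(1)$. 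So your maps satisfy only $\limsup_n\|\beta_n'(xy)-\beta_n'(x)\beta_n'(y)\|\le C\delta'$; the assertion that condition (4) is ``inherited as a genuine limit'' is unjustified, and false in general for this construction. The uniformity in $n$ that correctly rescues conditions (1), (2) and (5) is precisely what is insufficient for (4): that condition tolerates no fixed error, however small. Nor can you appeal to the arbitrariness of $\varepsilon'$: for the one given triple $(\varepsilon,F,c)$ the definition demands a single algebra $B'\in\mathcal{P}$ and maps with the exact limit property, and producing for each $m$ a different algebra $B'_m$ with maps multiplicative up to $1/m$ is a strictly weaker statement that no diagonal argument converts into the definition, since the algebra changes with $m$.

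The missing idea is to make all corrections with errors that tend to $0$ in $n$, and the way to do that is to straighten $\beta_n(q)$ itself rather than $r_n=p\beta_n(q)p$. Since $q$ is an honest projection of $B$, condition (4) for $A$ gives $\|\beta_n(q)^2-\beta_n(q)\|\to 0$, so for large $n$ the projection $P_n=\chi_{(1/2,1]}(\beta_n(q))$ satisfies $\|P_n-\beta_n(q)\|\to 0$, and the compression $y\mapsto P_n\beta_n(y)P_n$ \emph{does} satisfy the exact limit conditions in (4). The projection $P_n$ need not lie in $pAp$, but $\|P_n-R_n\|\le O(\delta')+o_n(1)<1$, so there are unitaries $u_n\in A$ with $u_nP_nu_n^*=R_n$ and $\|u_n-1\|$ small; conjugation by a unitary preserves exact asymptotic multiplicativity and isometry, so $\beta_n''(y):=u_nP_n\beta_n(y)P_nu_n^*$ maps $qBq$ into $R_nAR_n\subseteq pAp$, still satisfies (4), and has $\beta_n''(q)=R_n+o_n(1)$; a final unitization correction by the invertible $\beta_n''(q)$ inside $R_nAR_n$ now costs only $o_n(1)$ and therefore does not spoil (4). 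With this replacement (your $\gamma_n'$ needs no such care, since no exact asymptotic condition is imposed on it), the rest of your outline --- orthogonality of ranges, conditions (2), (3), (5), and passing to a tail --- goes through as you describe.
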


The following lemma is obvious,  and we omit the proof.
\begin{lemma}\label{lem:2.7} The   $n$-comparison (or $\beta$-comparison) is preserved  under  tensoring with
matrix algebras and  under passing to  unital hereditary ${\rm C^*}$-subalgebras.
\end{lemma}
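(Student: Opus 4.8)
The plan is to reduce everything to the single observation that both $n$-comparison and $\beta$-comparison are formulated purely in terms of the ordered abelian semigroup $\mathrm{W}(A)$ — its order $\le$, its addition, integer multiples (which are just repeated addition), and the derived relation $<_s$ — so that any map between Cuntz semigroups which respects $\le$ and $+$ automatically transports these properties. Accordingly I would first isolate two structural facts about $\mathrm{W}(\cdot)$ and then let them do all the work.

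First, for the matrix case, recall that $M_n(M_k(A)) = M_{nk}(A)$, so $M_\infty(M_k(A)) = M_\infty(A)$ and the identity on positive elements descends to an isomorphism of ordered semigroups $\Phi\colon \mathrm{W}(M_k(A)) \xrightarrow{\ \cong\ } \mathrm{W}(A)$. Since $\Phi$ preserves $\le$ in both directions and is additive, it preserves integer multiples and hence the relation $x <_s y$ (that is, $(m+1)x \le my$ for some $m$). Thus a hypothesis such as $kx \le ly$ or $x <_s y_j$ in $\mathrm{W}(M_k(A))$ is equivalent to the same statement about the images in $\mathrm{W}(A)$, and likewise for the conclusions $x \le y_0 + \cdots + y_n$ or $x \le y$. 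Applying the comparison hypothesis for $A$ then yields the corresponding statement for $M_k(A)$ verbatim, so this case is essentially formal.

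Second, for a unital hereditary subalgebra I may write $B = eAe$ with $e = 1_B$ a projection in $A$. The inclusion $B \hookrightarrow A$ induces an additive, order-preserving map $\mathrm{W}(B) \to \mathrm{W}(A)$; the crucial point is that it is in fact an order embedding, i.e.\ for $x, y \in \mathrm{W}(B)$ one has $x \le y$ in $\mathrm{W}(B)$ if and only if $x \le y$ in $\mathrm{W}(A)$. Granting this, both preservation statements follow as before: I transport the hypotheses up to $\mathrm{W}(A)$ (using that $<_s$ and $kx \le ly$ are built from $\le$ and $+$), apply the comparison property of $A$ to get $x \le y_0 + \cdots + y_n$ (resp.\ $x \le y$) in $\mathrm{W}(A)$, and then pull the conclusion back down to $\mathrm{W}(B)$ by the order-embedding property, since every element involved already lies in $\mathrm{W}(B)$.

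The only genuinely non-formal point — the step I would flag as the main obstacle — is verifying that $\mathrm{W}(B) \to \mathrm{W}(A)$ is an order embedding, that is, the implication $a \lesssim_A b \Rightarrow a \lesssim_B b$ for $a, b \in M_\infty(B)_+$. Passing to $M_n(B) = e_n M_n(A) e_n$ with $e_n = \mathrm{diag}(e, \dots, e)$ reduces this to the corner situation. Here I would argue via Theorem \ref{thm:2.1}(3): fix $\delta > 0$, choose $d = g(a) \in C^*(a) \subseteq B$ with $d a d^* = (a-\delta)_+$, and pick $r \in A$ with $\|rbr^* - a\|$ small; set $w = dre$. Because $d = ed$ and $b = ebe$, the element $w = e(dr)e$ lies in $eAe = B$, and $wbw^* = drbr^*d^*$ is close to $dad^* = (a-\delta)_+$. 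Absorbing the approximation error through Theorem \ref{thm:2.1}(1) gives $(a-\delta)_+ \lesssim_B b$ for every $\delta$, whence $a \lesssim_B b$ by Theorem \ref{thm:2.1}(3). This standard hereditary-invariance of Cuntz comparison is exactly what makes the lemma ``obvious'', and once it is in hand both comparison notions pass to corners purely by the semigroup bookkeeping described above.
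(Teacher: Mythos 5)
Your proof is correct. The paper in fact offers no proof at all --- it declares the lemma ``obvious'' and omits the argument --- so there is nothing to compare against; what you have written is exactly the standard argument the authors leave implicit: both comparison properties are statements about the ordered semigroup $({\rm W}(\cdot),+,\leq)$, the identification ${\rm W}({\rm M}_k(A))\cong {\rm W}(A)$ disposes of the matrix case, and the hereditary case reduces to the order-embedding fact $a\lesssim_A b\Rightarrow a\lesssim_B b$ for $a,b\in {\rm M}_\infty(B)_+$, $B=eAe$, which your construction $w=dre$ (with $d=g(a)\in C^*(a)\subseteq B$, $dad=(a-\delta)_+$, and error absorption via Theorem \ref{thm:2.1}(1) and (3)) establishes correctly.
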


\section{The main results}

\begin{theorem}\label{thm:3.1}
 Let $\mathcal{P}$ be a class of  stably finite unital
${\rm C^*}$-algebras which have  $\beta$-comparison (in the sense of Kirchberg and R{\o}rdam), for some $1\leq \beta<\infty$.  Then $A$  has  $\beta$-comparison  for  any  simple  unital ${\rm C^*}$-algebra $A\in {\rm ATA}\mathcal{P}.$
\end{theorem}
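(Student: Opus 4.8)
The plan is to verify the defining inequality of $\beta$-comparison directly. Fix $x,y\in {\rm W}(A)$ and integers $k,l\ge 1$ with $k>\beta l$ and $kx\le ly$, and represent $x=\langle a\rangle$, $y=\langle b\rangle$ with $a,b\in {\rm M}_m(A)_+$ for some $m$, so that $a^{\oplus k}\lesssim b^{\oplus l}$ in ${\rm M}_\infty(A)$. By Theorem~\ref{thm:2.1}(3) it suffices to prove $(a-\delta)_+\lesssim b$ for every $\delta>0$. Fixing $\delta>0$ and applying Theorem~\ref{thm:2.1}(3) to the hypothesis, I would first extract $\eta>0$ together with a finite family of witnesses $v$ implementing $\big((a-\delta)_+\big)^{\oplus k}\lesssim \big((b-\eta)_+\big)^{\oplus l}$ up to an arbitrarily small tolerance.

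Next I would transfer this relation into an algebra $B\in\mathcal P$. Apply Definition~\ref{def:2.5} with a finite set $F$ containing (the matrix entries of) $a,b,(a-\delta)_+,(b-\eta)_+$ and the witnesses $v$, with tolerance $\varepsilon\ll\delta$, and with a nonzero positive tolerance element $a_0$ to be chosen below; this yields $B\in\mathcal P$ and maps $\alpha,\beta_n,\gamma_n$. Because $\alpha$ is an $F$-$\varepsilon$ approximate embedding, it is approximately multiplicative and approximately isometric on $F$, hence approximately preserves both the continuous functional calculus and the witnessed Cuntz subequivalence; applying $\alpha\otimes{\rm id}_{{\rm M}_m}$ to the witness relation gives $k\,\langle\alpha((a-\delta)_+)\rangle\le l\,\langle\alpha((b-\eta)_+)\rangle$ in ${\rm W}(M_m(B))={\rm W}(B)$. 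Since $B$ has $\beta$-comparison and $k>\beta l$, Lemma~\ref{lem:2.7} (matrix stability) yields $\alpha((a-\delta)_+)\lesssim\alpha((b-\eta)_+)$ in ${\rm M}_m(B)$. Finally, property~(4) makes $\beta_n$ asymptotically multiplicative and isometric, hence asymptotically order-preserving for $\lesssim$, so for all large $n$ one gets $\beta_n(\alpha((a-\delta)_+))\lesssim\beta_n(\alpha((b-\eta)_+))$ up to an arbitrarily small cut.

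To recombine, I would use property~(2): since $\gamma_n$ maps into $\beta_n(B)^\perp$, the elements $\gamma_n((a-\delta)_+)$ and $\beta_n(\alpha((a-\delta)_+))$ are orthogonal and their sum is within $\varepsilon$ of $(a-\delta)_+$, so by Theorem~\ref{thm:2.1}(1) one has $\big((a-\delta)_+-\varepsilon\big)_+\lesssim \gamma_n((a-\delta)_+)\oplus\beta_n(\alpha((a-\delta)_+))$. Bounding $\gamma_n((a-\delta)_+)\lesssim\gamma_n(1)\lesssim a_0$ (property~(5) and positivity of $\gamma_n$), and combining the previous paragraph with $\beta_n(\alpha((b-\eta)_+))\lesssim(b-\eta)_+$, gives $\big((a-\delta)_+-\varepsilon\big)_+\lesssim a_0\oplus(b-\eta)_+$. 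When $b$ is purely positive I would then choose, via Theorem~\ref{thm:2.1}(2), a function $f$ supported in $(0,\eta)$ and set $a_0=f(b)$; then $f(b)$ is orthogonal to $(b-\eta)_+$ and $(b-\eta)_++f(b)\lesssim b$, so $a_0\oplus(b-\eta)_+\lesssim b$ and hence $\big((a-\delta)_+-\varepsilon\big)_+\lesssim b$. Letting $\varepsilon,\delta\to0$ and invoking Theorem~\ref{thm:2.1}(3) would give $a\lesssim b$, i.e. $x\le y$.

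The main obstacle is the remaining case, where $\langle b\rangle$ is the class of a projection $q$: then $(b-\eta)_+\sim q$ leaves no room, $a_0\oplus q\not\lesssim q$ by stable finiteness, and the clean absorption above fails. Here I would instead manufacture the needed room inside the target itself, exploiting the strict inequality $k>\beta l$ and the matrix stability of $\beta$-comparison. After replacing $\alpha(q)$ by a genuine projection $\bar q\in {\rm M}_m(B)$ (possible since $\alpha$ is approximately multiplicative and $q^2=q$, so $\alpha(q)$ is close to a projection), I would amplify $k\langle\alpha((a-\delta)_+)\rangle\le l\langle\bar q\rangle$ by a large integer $N$ and spend the growing slack $N(k-\beta l)$ to find, using simplicity of $B$, a nonzero $e\in {\rm M}_m(B)_+$ with $\alpha((a-\delta)_+)\oplus e\lesssim\bar q$ and $\alpha(a_0)\lesssim e$. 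Transferring back through $\beta_n$ would then produce room $\beta_n(e)$ inside $q$ large enough to absorb the error $\gamma_n((a-\delta)_+)\lesssim\gamma_n(1)\lesssim a_0$, yielding $\big((a-\delta)_+-\varepsilon\big)_+\lesssim q$. Making this room argument and the choice of $a_0$ mutually consistent—so that $a_0$ is fixed before the approximation while $e$ is located afterwards—is the delicate point, and is where I expect the real work of the proof to lie.
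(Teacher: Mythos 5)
Your first three paragraphs reproduce, in essence, the paper's Case I (purely positive $b$): transfer the witnessed relation through $\alpha$, apply $\beta$-comparison in $B$ (via Lemma \ref{lem:2.7}), pull back through the asymptotically multiplicative maps $\beta_n$, and absorb the corner piece $\gamma_n(\cdot)\lesssim\gamma_n(1)\lesssim a_0$ by choosing $a_0=f(b)$ from Theorem \ref{thm:2.1}(2). That part is sound and matches the paper. The genuine gap is exactly where you say you expect ``the real work'' to lie, and it is not a technicality. First, your mechanism invokes simplicity of $B$, but $B\in\mathcal{P}$ is only assumed unital, stably finite, with $\beta$-comparison; only $A$ is simple. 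Indeed $B$ could be a matrix algebra ${\rm M}_j(\mathbb{C})$, in which case there is no nonzero $e$ with $N\langle e\rangle\le\langle\bar q\rangle$ once $N>{\rm rank}(\bar q)$, so ``spending the slack $N(k-\beta l)$'' by amplification cannot in general produce an element $e$ dominating $\alpha(a_0)$. Second, even granting such an $e$, the circularity you flag is fatal for your recombination: $a_0$ must be fixed before Definition \ref{def:2.5} is invoked, while $e$ (hence $\beta_n(e)$) exists only afterwards; attempting to deduce $a_0\lesssim\beta_n(e)$ from $\alpha(a_0)\lesssim e$ by writing $a_0\approx\gamma_n(a_0)+\beta_n\alpha(a_0)$ reintroduces an error term $\gamma_n(a_0)\lesssim\gamma_n(1)\lesssim a_0$ that is controlled only by $a_0$ itself, so the argument goes in a circle.

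The idea missing from your proposal, and the paper's actual resolution, is to apply the asymptotic tracial approximation a \emph{second} time, inside the hereditary corner $E=\gamma_n(1)A\gamma_n(1)$: by Lemma \ref{lem:2.6}, $E$ again lies in ${\rm ATA}\mathcal{P}$, and---this is the point---the small element for this second application \emph{may} be chosen to be something that only became available after the first application. In the paper's Case II ($a$ purely positive, $b\sim p_0$) it is $\beta_n\alpha(d)$, where the room $d$ is created from $a$ via Theorem \ref{thm:2.1}(2) and is transported into the $\beta_n$-corner by applying the comparison in $B$ to $(a-\delta)_++d$ as a single element; in Case III (both $a$ and $b$ projections), where no functional-calculus room exists on either side, the room is a projection gap $s$ extracted from \emph{stable finiteness}: if the transferred elements were equivalent projections, then $k\langle p_1\rangle\le l\langle p_1\rangle$ with $k>l$ would make a direct sum of copies of $p_1$ equivalent to a proper subprojection of itself. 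So the resolution of your ``delicate point'' is not a refinement of your single-application scheme but a different, two-stage decomposition, with the two comparison-controlled pieces landing in orthogonal corners of the target and the doubly small piece $\gamma_n'\gamma_n(1)$ absorbed by the after-the-fact room; your argument would also need the case split between $a$ purely positive and $a$ a projection, since the sources of the room are different in the two cases.
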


\begin{proof}
By Lemma \ref{lem:2.7} and Lemma  \ref{lem:2.6},  enlarging the class $\mathcal{P}$, we may suppose it is closed
under passing to matrix algebras and unital hereditary ${\rm C^*}$-subalgebras (i.e., Morita equivalent  ${\rm C^*}$-algebras).

  Let $a, b\in {\rm M}_{\infty}(A)_+$. By Theorem \ref {thm:2.1} (3),
  we need to show that $\langle(a-\varepsilon)_+\rangle\leq\langle b\rangle$ for any $\varepsilon>0,$  and  any  integers
 $k,~l\geq 1$ such that   $k>\beta l,$ $k \langle a\rangle \leq   l\langle b\rangle.$

We may assume that $a, b\in {\rm M}_{n}(A)_+$ for some  integer $n$. By Lemma \ref {lem:2.6} and Lemma  \ref{lem:2.7}, we may assume that
$a, b\in A_+$ and $\|a\|\leq 1,\|b\|\leq 1$.

    We divide the proof into three cases.

   $(\textbf{I})$, we suppose that  $b$ is not Cuntz equivalent to a projection.

 Given $\delta>0$, and $k,l\geq 1$ as above, with in particular $k>l$, since  $k\langle a\rangle \leq l\langle b\rangle.$
    Hence,
    by Theorem  \ref {thm:2.1} (1), there exists
  $v=(v_{i,j})\in {\rm M}_{k}(A),  ~1\leq i\leq k,~ 1\leq j\leq k$ such that
 $$ \|v({\rm diag}(b\otimes 1_{l},~0\otimes 1_{k-l})){v}^*-a\otimes 1_{k}\|<\delta.$$

By Theorem \ref {thm:2.1} (2),  there is a non-zero positive element $d$ orthogonal to $b$  such that  $$(b-\delta/2)_++d\lesssim b.$$

 With  $F=\{a,~ b, ~ v_{i,j}: ~1\leq i\leq k,~ 1\leq j\leq k\},$  and  $\varepsilon'>0$, since $A\in {\rm ATA}\mathcal{P},$ there exist
a ${\rm C^*}$-algebra $B$  in $\mathcal{P}$ and completely positive  contractive linear maps  $\alpha:A\to B$ and  $\beta_n: B\to A$, and $\gamma_n:A\to A\cap\beta_n(B)^{\perp}$ such that

$(1)$ the map $\alpha$ is unital  completely positive   linear map, $\beta_n(1_B)$ and $\gamma_n(1_A)$ are all projections, and  $\beta_n(1_B)+\gamma_n(1_A)=1_A$ for all $n\in \mathbb{N}$,

$(2)$ $\|x-\gamma_n(x)-\beta_n(\alpha(x))\|<\varepsilon'$ for all $x\in F$ and for all $n\in {\mathbb{N}}$,

$(3)$ $\alpha$ is an $F$-$\varepsilon'$ approximate embedding,

$(4)$ $\lim_{n\to \infty}\|\beta_n(xy)-\beta_n(x)\beta_n(y)\|=0$ and $\lim_{n\to \infty}\|\beta_n(xy)\|=\|x\|$ for all $x,y\in B$, and

$(5)$ $\gamma_n(1)\lesssim d$ for all $n\in \mathbb{N}$.

Since $ \|v{\rm diag}(b\otimes 1_{l},~0\otimes 1_{k-l}){v}^*-a\otimes 1_{k}\|<\delta$,
 by $(1)$,   we have
 $$ \|\alpha\otimes id_{M_k}(v({\rm diag}(b\otimes 1_{l},~0\otimes 1_{k-l})){v}^*)-\alpha(a)\otimes 1_{k}\|<\delta.$$
  By $(3)$, we have
 $$ \|\alpha\otimes id_{M_k}(v)({\rm diag}(\alpha(b)\otimes 1_{l},~0\otimes 1_{k-l}))\alpha\otimes id_{M_k}({v}^*)-\alpha(a)\otimes 1_{k}\|<2\delta.$$
 By  Theorem  \ref {thm:2.1} (1), we have
 $$k\langle(\alpha(a)-2\delta)_+\rangle\leq l\langle \alpha(b)\rangle.$$
 Since $B\in \Omega$, we have $$\langle(\alpha(a)-2\delta)_+\rangle\leq \langle \alpha(b)\rangle.$$
   Since $\langle(\alpha(a)-2\delta)_+\rangle\leq \langle \alpha(b)\rangle$, there exist $w\in B$ such that $$\|w\alpha(b)w^*- (\alpha(a)-2\delta)_+\|<\delta.$$
   Since $\|w\alpha(b)w^*- (\alpha(a)-2\delta)_+\|<\delta,$  we have
   $$\|\beta_n(w\alpha(b)w^*)- \beta_n((\alpha(a)-2\delta)_+)\|<\delta.$$
   By $(4)$ we have $$\|\beta_n(w)\beta_n(\alpha(b))\beta_n(w^*)- \beta_n((\alpha(a)-2\delta)_+)\|<2\delta.$$
 By Theorem  \ref {thm:2.1} (1), we have $$\langle(\beta_n((\alpha(a)-6\delta)_+)-2\delta)_+\rangle\leq \langle (\beta_n\alpha(b)-2\delta)_+\rangle.$$
 Therefore, we have
\begin{eqnarray}
\label{Eq:eq1}
&&\langle(a-\varepsilon)_+\rangle \nonumber\\
&&\leq\langle\gamma_n(a)\rangle+\langle (\beta_n\alpha(a)-20\delta)_+\rangle\nonumber\\
&&\leq\langle\gamma_n(1_A)\rangle+\langle (\beta_n\alpha(a)-20\delta)_+)\rangle\nonumber\\
&&\leq\langle d\rangle+\langle ((\beta_n(\alpha(a)-2\delta)_+)-6\delta)_+\rangle\nonumber\\
&&\leq\langle (b-\delta/2)_+\rangle
+\langle d\rangle\leq\langle b\rangle.\nonumber
\end{eqnarray}

$(\textbf{II})$, we suppose that  $b$ is Cuntz equivalent to a projection and   $a$ is not Cuntz equivalent to a projection.
Choose a projection $p_0$ such that $b$ is  Cuntz equivalent to $p_0.$
We may assume that $b=p_0.$

By Theorem \ref {thm:2.1} (2), let $\delta>0$ (with $\delta<\varepsilon$),  there exists a non-zero positive element $d$  orthogonal to $a$  such that
 $\langle(a-\delta)_++ d\rangle \leq\langle a\rangle.$
Since $k\langle a\rangle \leq l\langle p_0\rangle,$ we have $k\langle(a-\delta)_+ +d\rangle \leq l\langle p_0\rangle.$ Hence, as above, by
 Theorem \ref {thm:2.1} (1),  on increasing $\delta$ slightly, and decreasing $d$ slightly, there exists  $v=(v_{i,j})\in {\rm M}_{k}(A), ~1\leq i\leq k,~ 1\leq j\leq k$, such that
   $$\|v{\rm diag}(p_0\otimes 1_{l}, ~0\otimes 1_{k-l}) v^*-((a-\delta)_++ d)\otimes1_{k}\|<\delta.$$

  Since $A\in {\rm ATA}\mathcal{P},$  for $F=\{a,~ p_0, ~d,~ {v_{i,j}}: ~1\leq i\leq k,~ 1\leq j\leq k\},$  and  $\varepsilon'>0$, there exist
a ${\rm C^*}$-algebra $B$  in $\mathcal{P}$ and completely positive  contractive linear maps  $\alpha:A\to B$ and  $\beta_n: B\to A$, and $\gamma_n:A\to A\cap\beta_n(B)^{\perp}$ such that

$(1)$ the map $\alpha$ is unital  completely positive   linear map, $\beta_n(1_B)$ and $\gamma_n(1_A)$ are all projections, $\beta_n(1_B)+\gamma_n(1_A)=1_A$ for all $n\in \mathbb{N}$,

$(2)$ $\|x-\gamma_n(x)-\beta_n(\alpha(x))\|<\varepsilon'$ for all $x\in F$ and for all $n\in {\mathbb{N}}$.

$(3)$ $\alpha$ is an $F-\varepsilon'$-approximate embedding,

$(4)$ $\lim_{n\to \infty}\|\beta_n(xy)-\beta_n(x)\beta_n(y)\|=0$ and $\lim_{n\to \infty}\|\beta_n(xy)\|=\|x\|$ for all $x,y\in B$.

Since $ \|v{\rm diag} (p_0\otimes 1_{l}, ~0\otimes 1_{k-l}) v^*-((a-\delta)_++ d)\otimes1_{k}\|<\delta$,
  by $(1)$,   we have
 $$ \|\alpha\otimes id_{M_k}(v({\rm diag}(p_0\otimes 1_{l},~0\otimes 1_{k-l})){v}^*)-\alpha((a-\delta)_++ d)\otimes 1_{k}\|<\delta.$$
  By $(3)$, we have
 $$ \|\alpha\otimes id_{M_k}(v)({\rm diag}(\alpha(p_0)\otimes 1_{l},~0\otimes 1_{k-l}))\alpha\otimes id_{M_k}({v}^*)-\alpha((a-\delta)_++ d)\otimes 1_{k}\|<2\delta.$$
 By  Theorem  \ref {thm:2.1} (1), we have
 $$k\langle(\alpha((a-\delta)_++ d)-2\delta)_+\rangle\leq l\langle \alpha(b)\rangle.$$
 Since $B\in \Omega$, we have $$\langle(\alpha((a-\delta)_++ d)-2\delta)_+\rangle\leq \langle \alpha(b)\rangle.$$
   Since $\langle(\alpha((a-\delta)_++ d)-2\delta)_+\rangle\leq \langle \alpha(b)\rangle$, there exist $w\in B$ such that $$\|w\alpha(b)w^*- (\alpha((a-\delta)_++ d)-2\delta)_+\|<\delta.$$
   Since $\|w\alpha(b)w^*- (\alpha((a-\delta)_++ d)-2\delta)_+\|<\delta$,  we have
   $$\|\beta_n(w\alpha(b)w^*)- \beta_n((\alpha((a-\delta)_++ d)-2\delta)_+)\|<\delta.$$
   By $(4)$ we have $$\|\beta_n(w)\beta_n\alpha(b)\beta_n(w^*)- \beta_n((\alpha((a-\delta)_++ d)-2\delta)_+)\|<2\delta.$$
By Theorem \ref {thm:2.1} (1), we have $$\langle(\beta_n(\alpha((a-\delta)_++ d))-6\delta)_+\rangle\leq \langle \beta_n\alpha(b)\rangle.$$

Since $(a-\delta)_+$ orthogonal to $d$, by $(3)$ and
$(4)$, we may assume that $\beta_n\alpha((a-\delta)_+)$ orthogonal to $\beta_n\alpha(d)$.

 With  $G=\{\gamma_n(a), \gamma_n(p_0),~  \gamma_n(v_{i,j}): ~1\leq i\leq k,~ 1\leq j\leq k\},$  and any $\varepsilon''>0$, let $E=\gamma_n(1)A\gamma_n(1)$, since $E$ is  asymptotically tracially in $\Omega$, there exist
a ${\rm C^*}$-algebra $D$  in $\Omega$ and completely positive  contractive linear maps  $\alpha':E\to D$ and  $\beta_n': D\to E$, and $\gamma_n':E\to E\cap\beta_n'(D)^{\perp}$ such that

$(1)'$ the map $\alpha'$ is unital  completely positive   linear map, $\beta_n'(1_D)$ and $\gamma_n'(1_E)$ are all projections, $\beta_n'(1_D)+\gamma_n'(1_E)=1_E$ for all $n\in \mathbb{N}$,

$(2)'$ $\|x-\gamma_n'(x)-\beta_n'(\alpha'(x))\|<\varepsilon''$ for all $x\in G$ and for all $n\in {\mathbb{N}}$,

$(3)'$ $\alpha'$ is an $F$-$\varepsilon''$ approximate embedding,

$(4)'$ $\lim_{n\to \infty}\|\beta_n'(xy)-\beta_n'(x)\beta_n(y)\|=0$ and $\lim_{n\to \infty}\|\beta_n'(xy)\|=\|x\|$ for all $x,y\in D$, and

$(5)'$ $\gamma_n'(1)\lesssim \beta_n\alpha(d)$ for all $n\in \mathbb{N}$.

By $(2)$, we have
 \begin{eqnarray}
\label{Eq:eq1}
&&\|(\gamma_n\otimes id_{M_{k}}(v))({\rm diag}(\gamma_n(p_0)\otimes 1_l, ~0\otimes 1_{k-l}))(\gamma_n\otimes id_{M_{k}}({v}^*))\nonumber\\
&&-\gamma_n((a-\delta)_++d)\otimes 1_k\|
<\delta.\nonumber
\end{eqnarray}

Since $ \|(\gamma_n\otimes id_{M_{k}}(v))({\rm diag}(\gamma_n(p_0)\otimes 1_l, ~0\otimes 1_{k-l}))(\gamma_n\otimes id_{M_{k}}({v}^*))
-\gamma_n((a-\delta)_++d)\otimes 1_k\|
<\delta$,
  by $(1)'$ and $(3)'$,   we have
 \begin{eqnarray}
\label{Eq:eq1}
&&\|\alpha'\otimes id_{M_{k}}\gamma_n\otimes id_{M_{k}}(v){\rm diag}(\alpha'\gamma_n(p_0)\otimes 1_l, ~0\otimes 1_{k-l}))\nonumber\\
&&\alpha'\otimes id_{M_{k}}\gamma_n\otimes id_{M_{k}}(v^*)-\alpha'\gamma_n((a-\delta)_++d)\otimes 1_k\|<2\delta.\nonumber
\end{eqnarray}
 By Theorem \ref {thm:2.1} (1),  we have
  $$k\langle(\alpha'\gamma_n((a-\delta)_++d)-4\delta)_+\rangle\leq l\langle (\alpha'\gamma_n(p_0)-\delta)_+\rangle. $$
Since $B\in \Omega,$  this implies  $$\langle (\alpha'\gamma_n((a-\delta)_++d)-4\delta)_+\rangle\leq \langle (\alpha'\gamma_n(p_0)-\delta)_+\rangle.$$
Since $\langle (\alpha'\gamma_n((a-\delta)_++d)-4\delta)_+\rangle\leq \langle (\alpha'\gamma_n(p_0)-\delta)_+\rangle,$ there exist $w\in D$ such that
$$\|w(\alpha'\gamma_n(p_0)-\delta)_+w^*-\alpha'\gamma_n((a-\delta)_++d)-4\delta)_+\|<\delta.$$
By $(4)'$, we have
$$\|\beta_n'(w)\beta_n'\alpha'\gamma_n(p_0)-\delta)_+\beta_n(w^*)-
\beta_n\alpha'\gamma_n((a-\delta)_++d)-\delta)_+\|<2\delta.$$
 By Theorem \ref {thm:2.1} (1),  we have
$$\langle (\beta_n'(\alpha'\gamma_n((a-\delta)_++d)-\delta)_+-2\delta)_+\rangle\leq \langle \beta_n'(\alpha'\gamma_n(p_0)-\delta)_+)\rangle.$$

Therefore, we have
\begin{eqnarray}
\label{Eq:eq1}
&&\langle(a-\varepsilon)_+\rangle \nonumber\\
&&\leq\langle(\gamma_n(a)-8\delta)_+\rangle+\langle (\beta_n\alpha((a-\delta)_+)-2\delta)_+\rangle\nonumber\\
 &&\leq\langle\gamma_n'\gamma_n(a)\rangle+\langle \beta_n'\alpha'\gamma_n((a-\delta)_+-6\delta)_+))\rangle\nonumber\\
  &&+\langle (\beta_n\alpha((a-\delta)_+)-2\delta)_+\rangle\leq\langle\gamma_n'\gamma_n(1_A)\rangle+
  \langle (\beta_n\alpha((a-\delta)_+)-2\delta)_+\rangle \nonumber\\
&&+\langle (\beta_n'\alpha'\gamma_n((a-\delta)_+)-2\delta)_+\rangle\nonumber\\
&&\leq\langle\beta_n\alpha(d)\rangle+
  \langle (\beta_n\alpha((a-\delta)_+)-2\delta)_+\rangle+\langle \beta_n'\alpha'\gamma_n((a-\delta)_+)-4\delta)_+\rangle\nonumber\\
&&\leq\langle(\beta_n'\alpha'\gamma_n(p_0)-\delta)_+\rangle
+\langle \beta_n\alpha(p_0)\rangle\leq\langle p_0\rangle.\nonumber
\end{eqnarray}

$(\textbf{III})$, we suppose that both $a$ and $b$ are  Cuntz equivalent to projections.

   Choose  projections $p,~ q$ such that $a$ is Cuntz equivalent to $p$ and  $b$  is Cuntz equivalent to $q.$
We may assume that $a=p,~ b=q.$ Since
 $k\langle p\rangle\leq l\langle q\rangle$. Hence,
    by Theorem  \ref {thm:2.1} (1), there exists
  $v=(v_{i,j})\in {\rm M}_{k}(A),  ~1\leq i\leq k,~ 1\leq j\leq k$ such that
 $$ v{\rm diag}(q\otimes 1_{l},~0\otimes 1_{k-l}){v}^*=p\otimes 1_{k}.$$
Since $A\in {\rm ATA}\mathcal{P}$, for $F=\{p,~ q, \gamma_n(v_{i,j}): ~1\leq i\leq k,~ 1\leq j\leq k\},$ for  any $\varepsilon'>0,$
  there exist
a ${\rm C^*}$-algebra $B$  in $\mathcal{P}$ and completely positive  contractive linear maps  $\alpha:A\to B$ and  $\beta_n: B\to A$, and $\gamma_n:A\to A\cap\beta_n(B)^{\perp}$ such that

$(1)$ the map $\alpha$ is unital  completely positive   linear map, $\beta_n(1_B)$ and $\gamma_n(1_A)$ are all projections $\beta_n(1_B)+\gamma_n(1_A)=1_A$ for all $n\in \mathbb{N}$,

$(2)$ $\|x-\gamma_n(x)-\beta_n(\alpha(x))\|<\varepsilon'$ for all $x\in F$ and for all $n\in {\mathbb{N}}$.

$(3)$ $\alpha$ is an $F$-$\varepsilon'$ approximate embedding,

$(4)$ $\lim_{n\to \infty}\|\beta_n(xy)-\beta_n(x)\beta_n(y)\|=0$ and $\lim_{n\to \infty}\|\beta_n(xy)\|=\|x\|$ for all $x,y\in B$.

Since $ v{\rm diag} (q\otimes 1_{l}, ~0\otimes 1_{k-l}) v^*=p\otimes1_{k}$,
by $(2)$, we have
 \begin{eqnarray}
\label{Eq:eq1}
&&\|(\gamma_n\otimes id_{M_{k}}(v)+\beta_n\otimes id_{M_{k}}(\alpha\otimes id_{M_{k}}(v)))({\rm diag}(\gamma_n(q)\otimes 1_l, ~0\otimes 1_{k-l}) \nonumber\\
&&+{\rm diag}(\beta_n\alpha(q)\otimes 1_l, ~0\otimes 1_{k-l}))(\gamma_n\otimes id_{M_{k}}({v}^*)+\beta_n\otimes id_{M_{k}}(\alpha\otimes id_{M_{k}}({v}^*)))\nonumber\\
&&-(\gamma_n(p)\otimes 1_k, +\beta_n\alpha(p)\otimes 1_k)\|<\delta.\nonumber
\end{eqnarray}

 By $(1)$, we have
 \begin{eqnarray}
\label{Eq:eq1}
&&\|(\gamma_n\otimes id_{M_{k}}(v)){\rm diag}(\gamma_n(q)\otimes 1_l, ~0\otimes 1_{k-l})(\gamma_n\otimes id_{M_{k}}({v}^*))\nonumber\\
&&-\gamma_n(p)\otimes 1_k\|
<\delta.\nonumber
\end{eqnarray}
Since $ v{\rm diag} (q\otimes 1_{l}, ~0\otimes 1_{k-l}) v^*=p\otimes1_{k}$,
by $(1)$ and $(3)$, we have
$$\|\alpha\otimes id_{M_{k}}(v)){\rm diag}(\alpha(q)\otimes 1_l, ~0\otimes 1_{k-l})
\alpha\otimes id_{M_{k}}({v}^*)-\alpha(p)\otimes 1_k\|
<\delta.$$
Since $\|\alpha\otimes id_{M_{k}}(v)){\rm diag}(\alpha(q)\otimes 1_l, ~0\otimes 1_{k-l})
\alpha\otimes id_{M_{k}}({v}^*)-\alpha(p)\otimes 1_k\|
<\delta,$
by Theorem  \ref {thm:2.1} (1), we have
$$k\langle(\alpha(p)-\delta)_+\rangle\leq l\langle\alpha(q)\rangle.$$
Since $B\in \Omega$, we have
$$\langle(\alpha(p)-\delta)_+\rangle\leq \langle\alpha(q)\rangle.$$
Since $\langle(\alpha(p)-\delta)_+\rangle\leq \langle\alpha(q)\rangle,$ there exists $w\in B$ such that $$\|w\alpha(q)w^*-(\alpha(p)-\delta)_+\|<\delta.$$
By $(4)$, we have
$$\|\beta_n(w)\beta_n\alpha(q)\beta_n(w^*)-\beta_n((\alpha(p)-\delta)_+)\|<2\delta.$$
By Theorem \ref {thm:2.1} (1), we  have
$$\langle(\beta_n \alpha(p)-2\delta)_+\rangle\leq \langle \beta_n\alpha(q)\rangle.$$

$(\textbf{III.I})$, if $(\beta_n\alpha (p)-2\delta)_+$ and $\beta_n\alpha(q)$  are not Cuntz equivalent to a pure positive element, then there exist projections $p_1, q_1$ such that $(\beta_n\alpha (p)-2\delta)_+\sim p_1$ and $\beta_n\alpha(q)\sim q_1$,
 then we have $k\langle p_1\rangle\leq l\langle q_1\rangle=l\langle p_1\rangle\leq k\langle p_1\rangle.$
So $\bigoplus_{n=1}^{k}p_1$ is equivalent to a proper subprojection of itself, and this
contradicts  the stable finiteness of $A$ (since ${\rm C^*}$-algebras in $\mathcal{P}$ are stably finite (cf. proposition 4.2 in \cite{FL}).
So  we may assume that there exist a nonzero projection $s\in A$ and orthogonal to $(\beta_n\alpha (p)-2\delta)_+$ such that $(\beta_n\alpha (p)-2\delta)_++s\lesssim \beta_n\alpha(q)$.

With  $G=\{\gamma_n(p), \gamma_n(q),~  \gamma_n(v_{i,j}): ~1\leq i\leq k,~ 1\leq j\leq k\},$  and any $\varepsilon''>0$, $E=\gamma_n(1)A\gamma_n(1)$, since $E$ is  asymptotically tracially in $\Omega$, there exist
a ${\rm C^*}$-algebra $D$  in $\Omega$ and completely positive  contractive linear maps  $\alpha':E\to D$ and  $\beta_n': D\to E$, and $\gamma_n':E\to E\cap\beta_n'(D)^{\perp}$ such that

$(1)'$ the map $\alpha'$ is unital  completely positive   linear map, $\beta_n'(1_D)$ and $\gamma_n'(1_E)$ are all projections, $\beta_n'(1_D)+\gamma_n'(1_E)=1_E$ for all $n\in \mathbb{N}$,

$(2)'$ $\|x-\gamma_n'(x)-\beta_n'(\alpha'(x))\|<\varepsilon''$ for all $x\in G$ and for all $n\in {\mathbb{N}}$,

$(3)'$ $\alpha'$ is an $F$-$\varepsilon''$ approximate embedding,

$(4)'$ $\lim_{n\to \infty}\|\beta_n'(xy)-\beta_n'(x)\beta_n(y)\|=0$ and $\lim_{n\to \infty}\|\beta_n'(xy)\|=\|x\|$ for all $x,y\in D$, and

$(5)'$ $\gamma_n'\gamma_n(1)\lesssim s$ for all $n\in \mathbb{N}$.

Since
$$\|(\gamma_n\otimes id_{M_{k}}(v)){\rm diag}(\gamma_n(q)\otimes 1_l, ~0\otimes 1_{k-l})(\gamma_n\otimes id_{M_{k}}({v}^*))
-\gamma_n(p)\otimes 1_k\|
<\delta.$$

By $(1)'$, we have
\begin{eqnarray}
\label{Eq:eq1}
&&\|\alpha'\otimes id_{M_{k}}\gamma_n\otimes id_{M_{k}}(v)){\rm diag}(\alpha'\gamma_n(q)\otimes 1_l, ~0\otimes 1_{k-l})\nonumber\\
&&\alpha'\otimes id_{M_{k}}\gamma_n\otimes id_{M_{k}}(v^*)-\alpha'\gamma_n(p)\otimes 1_k\|
<2\delta.\nonumber
\end{eqnarray}
By Theorem \ref {thm:2.1} (1), we  have
$$k\langle(\alpha'\gamma_n(p)-4\delta)_+\rangle\leq l\langle (\alpha'\gamma_n(q)-\delta)_+
\rangle. $$
Since $D\in \Omega$, we have
$$\langle(\alpha'\gamma_n(p)-4\delta)_+\rangle\leq \langle (\alpha'\gamma_n(q)-\delta)_+
\rangle. $$
There exists $w\in D$ such that
$$\|w(\alpha'\gamma_n(q)-\delta)_+w^*-(\alpha'\gamma_n(p)-2\delta)_+\|<\delta.$$
By $(4')$, we have
$$\|\beta_n'(w)\beta_n'(\alpha'\gamma_n(q)-\delta)_+\beta_n(w^*)
-\beta_n((\alpha'\gamma_n(p)-2\delta)_+)\|<2\delta.$$
We  have   $$\langle (\beta_n'\alpha'\gamma_n(p)-6\delta)_+\rangle\leq \langle (\beta_n'\alpha'\gamma_n(q)-\delta)_+\rangle.$$

Therefore, if $\varepsilon'$, are small enough, then
\begin{eqnarray}
\label{Eq:eq1}
&&\langle (p-\varepsilon)_+\rangle \nonumber\\
&&\leq\langle\gamma_n(p)-6\delta)_+\rangle+\langle (\beta_n\alpha(p)-2\delta)_+\rangle\nonumber\\
 &&\leq\langle(\gamma_n'\gamma_n(p)-4\delta)_+\rangle+\langle (\beta_n'\alpha'\gamma_n(p)\rangle\nonumber\\
  &&+\langle (\beta_n\alpha(p)-2\delta)_+\rangle\leq\langle\gamma_n'\gamma_n(1_A)\rangle+
  \langle (\beta_n\alpha(p)-2\delta)_+\rangle \nonumber\\
&&+\langle (\beta_n'\alpha'\gamma_n(p)-4\delta)_+\rangle\nonumber\\
&&\leq\langle s\rangle+
  \langle (\beta_n\alpha(p)-2\delta)_+\rangle+\langle (\beta_n'\alpha'\gamma_n(p)-2\delta)_+\rangle
\leq \langle q\rangle.\nonumber
\end{eqnarray}

$(\textbf{III.II})$, We suppose that $(\beta_n\alpha (p)-2\delta)_+$ is a purely positive element, then, by Theorem \ref {thm:2.1} (2),   there is a non-zero positive element $d$ orthogonal to $(\beta_n\alpha (p)-2\delta)_+$  such that
  $((\beta_n\alpha (p)-2\delta)_+-\delta)_++d\lesssim (\beta_n\alpha (p)-2\delta)_+$.

 With  $G=\{\gamma_n(p), \gamma_n(q), ~ \gamma_n(v_{i,j}): ~1\leq i\leq k,~ 1\leq j\leq k\},$  and  $\varepsilon''>0$, $E=\gamma_n(1)A\gamma_n(1)$, since $E$ is  asymptotically tracially in $\Omega$, there exist
a ${\rm C^*}$-algebra $D$  in $\Omega$ and completely positive  contractive linear maps  $\alpha':E\to D$ and  $\beta_n': D\to E$, and $\gamma_n':E\to E\cap\beta_n'(D)^{\perp}$ such that

$(1)'$ the map $\alpha'$ is unital  completely positive   linear map, $\beta_n'(1_D)$ and $\gamma_n'(1_E)$ are all projections, $\beta_n'(1_D)+\gamma_n'(1_E)=1_E$ for all $n\in \mathbb{N}$,

$(2)'$ $\|x-\gamma_n'(x)-\beta_n'(\alpha'(x))\|<\varepsilon''$ for all $x\in G$ and for all $n\in {\mathbb{N}}$,

$(3)'$ $\alpha'$ is an $F$-$\varepsilon''$ approximate embedding,

$(4)'$ $\lim_{n\to \infty}\|\beta_n'(xy)-\beta_n'(x)\beta_n(y)\|=0$ and $\lim_{n\to \infty}\|\beta_n'(xy)\|=\|x\|$ for all $x,y\in D$, and

$(5)'$ $\gamma_n'\gamma_n(1)\lesssim d$ for all $n\in \mathbb{N}$.

Since
$$\|(\gamma_n\otimes id_{M_{k}}(v)){\rm diag}(\gamma_n(q)\otimes 1_l, ~0\otimes 1_{k-l})(\gamma_n\otimes id_{M_{k}}({v}^*))
-\gamma_n(p)\otimes 1_k\|
<\delta.$$

By $(1)'$, we have
\begin{eqnarray}
\label{Eq:eq1}
&&\|\alpha'\otimes id_{M_{k}}\gamma_n\otimes id_{M_{k}}(v){\rm diag}(\alpha'\gamma_n(q)\otimes 1_l, ~0\otimes 1_{k-l})\nonumber\\
&&\alpha'\otimes id_{M_{k}}\gamma_n\otimes id_{M_{k}}(v^*)-\alpha'\gamma_n(p)\otimes 1_k\|<2\delta.\nonumber
\end{eqnarray}
By Theorem \ref {thm:2.1} (1), we  have
$$k\langle(\alpha'\gamma_n(p)-4\delta)_+\rangle\leq l\langle (\alpha'\gamma_n(q)-\delta)_+
\rangle. $$
Since $D\in \Omega$, we have
$$\langle(\alpha'\gamma_n(p)-4\delta)_+\rangle\leq \langle (\alpha'\gamma_n(q)-\delta)_+
\rangle. $$
There exists $w\in D$ such that
$$\|w(\alpha'\gamma_n(q)-\delta)_+w^*-(\alpha'\gamma_n(p)-4\delta)_+\|<\delta.$$
By $(4')$, we have
$$\|\beta_n'(w)\beta_n'(\alpha'\gamma_n(q)-\delta)_+\beta_n(w^*)
-\beta_n((\alpha'\gamma_n(p)-4\delta)_+)\|<2\delta.$$
We  have   $$\langle (\beta_n'\alpha'\gamma_n(p)-8\delta)_+\rangle\leq \langle (\beta_n'\alpha'\gamma_n(q)-\delta)_+\rangle.$$

Therefore, if $\varepsilon'$, are small enough, then
\begin{eqnarray}
\label{Eq:eq1}
&&\langle (p-\varepsilon)_+\rangle \nonumber\\
&&\leq\langle\gamma_n(p)-6\delta)_+\rangle+\langle (\beta_n\alpha(p)-2\delta)_+\rangle\nonumber\\
 &&\leq\langle(\gamma_n'\gamma_n(p)-4\delta)_+\rangle+\langle (\beta_n'\alpha'\gamma_n(p)\rangle\nonumber\\
  &&+\langle (\beta_n\alpha(p)-2\delta)_+\rangle\leq\langle\gamma_n'\gamma_n(1_A)\rangle+
  \langle (\beta_n\alpha(p)-2\delta)_+\rangle \nonumber\\
&&+\langle (\beta_n'\alpha'\gamma_n(p)-4\delta)_+\rangle\nonumber\\
&&\leq\langle s\rangle+
  \langle (\beta_n\alpha(p)-2\delta)_+\rangle+\langle (\beta_n'\alpha'\gamma_n(p)-2\delta)_+\rangle\nonumber\\
&&\leq \langle q\rangle.\nonumber
\end{eqnarray}

$(\textbf{III.III})$, we suppose that   $(\beta_n\alpha (p)-2\delta)_+$ is Cuntz equivalent to a projection and   $\beta_n\alpha(q)$ is not Cuntz equivalent to a projection.
Choose a projection $p_0$ such that  $(\beta_n\alpha (p)-2\delta)_+$ is  Cuntz equivalent to $p_0.$
We may assume that $ (\beta_n\alpha (p)-2\delta)_+=p_0.$

By Theorem \ref {thm:2.1} (2),  there exists a non-zero positive element $d$  orthogonal to $\beta_n\alpha(q)$ such that
 $\langle(\beta_n\alpha (q)-4\delta)_++ d\rangle \leq\langle\beta_n\alpha(q)\rangle.$

   With  $G=\{\gamma_n(p), \gamma_n(q),~ \gamma_n(v_{i,j}): ~1\leq i\leq k,~ 1\leq j\leq k\},$  and any sufficiently small $\varepsilon''>0$,  $E=\gamma_n(1)A\gamma_n(1)$, since $E$ is  asymptotically tracially in $\Omega$, there exist
a ${\rm C^*}$-algebra $D$  in $\Omega$ and completely positive  contractive linear maps  $\alpha':E\to D$ and  $\beta_n': D\to E$, and $\gamma_n':E\to E\cap\beta_n'(D)^{\perp}$ such that

$(1)'$ the map $\alpha'$ is unital  completely positive   linear map, $\beta_n'(1_D)$ and $\gamma_n'(1_E)$ are all projections, $\beta_n'(1_D)+\gamma_n'(1_E)=1_E$ for all $n\in \mathbb{N}$,

$(2)'$ $\|x-\gamma_n'(x)-\beta_n'(\alpha'(x))\|<\varepsilon''$ for all $x\in G$ and for all $n\in {\mathbb{N}}$,

$(3)'$ $\alpha'$ is an $F$-$\varepsilon''$ approximate embedding,

$(4)'$ $\lim_{n\to \infty}\|\beta_n'(xy)-\beta_n'(x)\beta_n(y)\|=0$ and $\lim_{n\to \infty}\|\beta_n'(xy)\|=\|x\|$ for all $x,y\in D$, and

$(5)'$ $\gamma_n'\gamma_n(1)\lesssim d$ for all $n\in \mathbb{N}$.

Since
$$\|(\gamma_n\otimes id_{M_{k}}(v)){\rm diag}(\gamma_n(q)\otimes 1_l, ~0\otimes 1_{k-l})(\gamma_n\otimes id_{M_{k}}({v}^*))
-\gamma_n(p)\otimes 1_k\|
<\delta.$$

By $(1)'$, we have
\begin{eqnarray}
\label{Eq:eq1}
&&\|\alpha'\otimes id_{M_{k}}\gamma_n\otimes id_{M_{k}}(v){\rm diag}(\alpha'\gamma_n(q)\otimes 1_l, ~0\otimes 1_{k-l})\nonumber\\
&&\alpha'\otimes id_{M_{k}}\gamma_n\otimes id_{M_{k}}(v^*)-\alpha'\gamma_n(p)\otimes 1_k\|<2\delta.\nonumber
\end{eqnarray}
By Theorem \ref {thm:2.1} (1), we  have
$$k\langle(\alpha'\gamma_n(p)-4\delta)_+\rangle\leq l\langle (\alpha'\gamma_n(q)-\delta)_+
\rangle. $$
Since $D\in \Omega$, we have
$$\langle(\alpha'\gamma_n(p)-4\delta)_+\rangle\leq \langle (\alpha'\gamma_n(q)-\delta)_+
\rangle. $$
There exists $w\in D$ such that
$$\|w(\alpha'\gamma_n(q)-\delta)_+w^*-(\alpha'\gamma_n(p)-4\delta)_+\|<\delta.$$
By $(4')$, we have
$$\|\beta_n'(w)\beta_n'(\alpha'\gamma_n(q)-\delta)_+\beta_n(w^*)
-\beta_n((\alpha'\gamma_n(p)-4\delta)_+)\|<2\delta.$$
We  have   $$\langle (\beta_n'\alpha'\gamma_n(p)-8\delta)_+\rangle\leq \langle (\beta_n'\alpha'\gamma_n(q)-\delta)_+\rangle.$$

Therefore, we have
\begin{eqnarray}
\label{Eq:eq1}
&&\langle (p-\varepsilon)_+\rangle \nonumber\\
&&\leq\langle\gamma_n(p)-6\delta)_+\rangle+\langle (\beta_n\alpha(p)-2\delta)_+\rangle\nonumber\\
 &&\leq\langle(\gamma_n'\gamma_n(p)-4\delta)_+\rangle+\langle (\beta_n'\alpha'\gamma_n(p)\rangle\nonumber\\
  &&+\langle (\beta_n\alpha(p)-2\delta)_+\rangle\leq\langle\gamma_n'\gamma_n(1_A)\rangle+
  \langle (\beta_n\alpha(p)-2\delta)_+\rangle \nonumber\\
&&+\langle (\beta_n'\alpha'\gamma_n(p)-4\delta)_+\rangle\nonumber\\
&&\leq\langle s\rangle+
  \langle (\beta_n\alpha(p)-2\delta)_+\rangle+\langle (\beta_n'\alpha'\gamma_n(p)-2\delta)_+\rangle\nonumber\\
&&\leq \langle q\rangle.\nonumber
\end{eqnarray}
\end{proof}

\begin{theorem}\label{thm:3.2}
 Let $\mathcal{P}$ be a class of stably finite unital
${\rm C^*}$-algebras which have Winter's $n$-comparison.  Then $A$  has  Winter's  $n$-comparison  for  any  simple  unital ${\rm C^*}$-algebra $A\in{\rm ATA}\mathcal{P}.$
\end{theorem}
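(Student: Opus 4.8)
The plan is to re-run the argument of Theorem~\ref{thm:3.1} almost verbatim, replacing the single inequality $k\langle a\rangle\le l\langle b\rangle$ by the hypotheses defining $n$-comparison and replacing the appeal to $\beta$-comparison of $B\in\mathcal{P}$ by an appeal to Winter's $n$-comparison of $B$. As there, Lemmas~\ref{lem:2.6} and~\ref{lem:2.7} let me enlarge $\mathcal{P}$ so that it is closed under matrix algebras and unital hereditary subalgebras, and Theorem~\ref{thm:2.1}(3) reduces the goal to proving $\langle(a-\varepsilon)_+\rangle\le\langle b_0\rangle+\cdots+\langle b_n\rangle$ for every $\varepsilon>0$, whenever $\langle a\rangle<_s\langle b_j\rangle$ for all $j=0,1,\dots,n$, with $a,b_0,\dots,b_n\in A_+$ of norm at most $1$.

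The one genuinely new ingredient is a normalization of the moduli. By definition each $\langle a\rangle<_s\langle b_j\rangle$ provides an integer $k_j$ with $(k_j+1)\langle a\rangle\le k_j\langle b_j\rangle$. Since $(k+1)x\le ky$ forces $(mk+1)x\le mky$ for every $m\ge1$ (because $mk+1\le m(k+1)$), I set $K=k_0k_1\cdots k_n$ and obtain a single modulus with $(K+1)\langle a\rangle\le K\langle b_j\rangle$ for all $j$. For each $j$, Theorem~\ref{thm:2.1}(1) witnesses this, up to an arbitrarily small tolerance, by a matrix $v^{(j)}$ over $A$ with $v^{(j)}\,{\rm diag}(b_j\otimes1_K,\,0)\,(v^{(j)})^{*}$ close to $a\otimes1_{K+1}$.

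Next I feed $a$, the $b_j$ and all entries of the $v^{(j)}$ into Definition~\ref{def:2.5}, obtaining $\alpha,\beta_n,\gamma_n$. Because $\alpha$ is an approximate embedding, applying $\alpha\otimes{\rm id}$ transports each inequality into $B$, giving $\langle(\alpha(a)-2\delta)_+\rangle<_s\langle\alpha(b_j)\rangle$ for every $j$; Winter's $n$-comparison of $B$ then yields $\langle(\alpha(a)-2\delta)_+\rangle\le\langle\alpha(b_0)\rangle+\cdots+\langle\alpha(b_n)\rangle$. Pulling this back through $\beta_n$ and using the approximate multiplicativity of item~$(4)$ of Definition~\ref{def:2.5} gives $\langle(\beta_n\alpha(a)-c\delta)_+\rangle\le\sum_{j}\langle\beta_n\alpha(b_j)\rangle$ for a fixed constant $c$. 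I then split $\langle(a-\varepsilon)_+\rangle\le\langle\gamma_n(a)\rangle+\langle(\beta_n\alpha(a)-c\delta)_+\rangle$ exactly as in Theorem~\ref{thm:3.1}, absorbing $\langle\gamma_n(a)\rangle\le\langle\gamma_n(1)\rangle$ into a single summand $\langle b_{j_0}\rangle$ by arranging $\gamma_n(1)\lesssim d$ for a small $d$ orthogonal to $b_{j_0}$ with $(b_{j_0}-\delta)_++d\lesssim b_{j_0}$, while each remaining $\langle\beta_n\alpha(b_j)\rangle\le\langle b_j\rangle$.

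I expect the main obstacle to be the one already met in Theorem~\ref{thm:3.1}: controlling the remainder term $\gamma_n$. When the elements in play are Cuntz equivalent to projections the cut-downs $(\,\cdot-\delta)_+$ no longer shrink them, so absorbing $\gamma_n(1)$ into $\langle b_{j_0}\rangle$ requires a second application of the ${\rm ATA}\mathcal{P}$ property, now to the corner $E=\gamma_n(1)A\gamma_n(1)$, together with the stable finiteness of the algebras in $\mathcal{P}$ to exclude a projection being equivalent to a proper subprojection of itself. This forces the same case analysis as in Theorem~\ref{thm:3.1}, according to whether $a$ and the $b_j$ are Cuntz equivalent to projections, with each case handled by the identical two-step transport-and-pull-back scheme; only the bookkeeping of the $\delta$-tolerances and of the $n+1$ summands changes.
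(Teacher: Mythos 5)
Your proposal is correct and takes essentially the same route as the paper's proof: the same reduction via Lemmas \ref{lem:2.6}, \ref{lem:2.7} and Theorem \ref{thm:2.1}(3), a common-modulus normalization (the paper takes $k=(k_0+1)\cdots(k_n+1)-1$ where you take $K=k_0\cdots k_n$; both are valid for the same reason, namely $x\leq x+z$ in ${\rm W}(A)$), the same transport-through-$\alpha$ and pull-back-through-$\beta_n$ scheme with $n$-comparison invoked in $B$, and the same three-case analysis with a second application of the ${\rm ATA}\mathcal{P}$ property to the corner $\gamma_n(1)A\gamma_n(1)$ and stable finiteness in the all-projection case. The only deviations are cosmetic bookkeeping (your constant $c\delta$ and the small cut-downs needed to justify $\langle(\beta_n\alpha(b_j)-\delta)_+\rangle\leq\langle b_j\rangle$), which match the paper's own level of precision.
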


\begin{proof} As in the proof of Theorem 3.1, we may suppose that $\mathcal{P}$ is closed under Morita equivalence.

 Let  $a, b_0, b_1, \cdots, b_n\in {\rm M}_{\infty}(A)_+$. By  Theorem \ref {thm:2.1} (3),
  we need only  to show that $\langle(a-\varepsilon)_+\rangle\leq \langle b_0\rangle+\langle b_1\rangle + \cdots+\langle b_n\rangle$ for  any $\varepsilon>0$, if  $(k_i+1)\langle a\rangle \leq k_i\langle b_i\rangle,~ 0\leq i\leq n.$
  Note that $k_i$ can be chosen to be the same for all $b_i,$ as, with $k=(k_0+1)(k_1+1)\cdots (k_n+1)-1,$ one has  $(k+1)\langle a\rangle \leq k\langle b_i\rangle$ for all $0\leq i \leq n.$

We may assume that $a, b_0, b_1, \cdots, b_n\in {\rm M}_{k}(A)_+$ for some sufficiently large integer $k$. By Lemma \ref {lem:2.6}, and Lemma \ref{lem:2.7},  we may assume that
$a, b_0, b_1, \cdots, b_n\in A_+$.

    We  divide the proof into three cases.

   $(\textbf{I})$, let us  suppose that $b_i$ is  not Cuntz equivalent to a projection for some $0\leq i\leq n.$    We may assume that $b_0$ is a   purely positive element.
  By  Theorem \ref {thm:2.1} (2), for  $\delta>0$,  there is a  non-zero positive element $d$  orthogonal to $b_0$ such that
  $(b_0-\delta/2)_++d\lesssim b_0$. Hence, as the proof of Theorem \ref{thm:3.1}, by Theorem \ref {thm:2.1} (1), there exist  $v_k=(v_{i,j}^k), ~ 0\leq k\leq n,~1\leq i\leq k+1,~ 1\leq j\leq k+1$ such that
         $$\|v_i{\rm diag}(b_i\otimes 1_{k},~ 0)v_i^*-a\otimes1_{k+1}\|<\delta, $$
   where $0\leq i\leq n.$

Since $A\in {\rm ATA}\mathcal{P}$, for  $F=\{a,~ b_0, ~b_1,~ \cdots,~ b_n,~d,~ {v_{i,j}^k}: ~1\leq i\leq k+1,~ 1\leq j\leq k+1,~ 0\leq k \leq n\},$  and  $\varepsilon'>0$,
  there exist
a ${\rm C^*}$-algebra $B$  in $\mathcal{P}$ and completely positive  contractive linear maps  $\alpha:A\to B$ and  $\beta_n: B\to A$, and $\gamma_n:A\to A\cap\beta_n(B)^{\perp}$ such that

$(1)$ the map $\alpha$ is unital  completely positive   linear map, $\beta_n(1_B)$ and $\gamma_n(1_A)$ are all projections $\beta_n(1_B)+\gamma_n(1_A)=1_A$ for all $n\in \mathbb{N}$,

$(2)$ $\|x-\gamma_n(x)-\beta_n(\alpha(x))\|<\varepsilon'$ for all $x\in F$ and for all $n\in {\mathbb{N}}$,

$(3)$ $\alpha$ is an $F$-$\varepsilon'$ approximate embedding,

$(4)$ $\lim_{n\to \infty}\|\beta_n(xy)-\beta_n(x)\beta_n(y)\|=0$ and $\lim_{n\to \infty}\|\beta_n(xy)\|=\|x\|$ for all $x,y\in B$, and

$(5)$ $\gamma_n(1)\lesssim d$ for all $n\in \mathbb{N}$.

Since  $$\|v_i{\rm diag}(b_i\otimes 1_{k},~ 0)v_i^*-a\otimes1_{k+1}\|<\delta, $$
   for all $0\leq i\leq n.$

By $(3)$, we have
$$\|\alpha\otimes id_{M_{k+1}}(v_i){\rm diag}(\alpha(b_i)\otimes 1_k, ~0))
\alpha\otimes id_{M_{k+1}}({v_i}^*)-\alpha(a)\otimes 1_{k+1}\|
<\delta,$$
for all $0\leq i\leq n.$

By  Theorem \ref {thm:2.1} (1), we have
$$(k+1)\langle(\alpha(a)-\delta)_+\rangle\leq \langle\alpha(b_i)\rangle,$$
for all $0\leq i\leq n.$

Since $B\in \Omega,$ we have $$\langle(\alpha(a)-\delta)_+\rangle\leq \langle\alpha(b_0)\rangle+\langle\alpha(b_1)
\rangle+\cdots+\langle\alpha(b_n)\rangle.$$
Since $$\langle(\alpha(a)-\delta)_+\rangle\leq \langle\alpha(b_0)\rangle+\langle\alpha(b_1)
\rangle+\cdots+\langle\alpha(b_n)\rangle,$$

There exists $w\in M_{k+1}(D)$ such that
$$\|w {\rm diag}(\alpha(b_0), \alpha(b_1),\cdots, \alpha(b_n))w^*-(\alpha(a)-\delta)_+\|<\delta.$$

By $(4)$, we have
$$ \|\beta_n\otimes id_{M_{k+1}}(w) {\rm diag}(\beta_n\alpha(b_0), \beta_n\alpha(b_1),\cdots, \beta_n\alpha(b_n)) \beta_n\otimes id_{M_{k+1}}(w^*)$$$$-(\beta_n\alpha(a)-\delta)_+\|<2\delta.$$

By Theorem \ref {thm:2.1} (1), we have
$$\langle(\beta_n\alpha(a)-4\delta)_+\rangle\leq \langle\beta_n\alpha(b_0-\delta)_+\rangle+\langle\beta_n\alpha(b_1)
\rangle+\cdots+\langle\beta_n\alpha(b_n)\rangle,$$

Therefore,  we have
\begin{eqnarray}
\label{Eq:eq1}
&&\langle(a-\varepsilon)_+\rangle \nonumber\\
&&\leq\langle\gamma_n(a)-\delta)_+\rangle+\langle (\beta_n\alpha(a)-4\delta)_+\rangle\nonumber\\
&&\leq\langle\gamma_n(a)\rangle+\langle\beta_n\alpha(b_0-\delta)_+)\rangle+\langle
\beta_n\alpha(b_1)
\rangle\nonumber\\
&&+\cdots+\langle\beta_n\alpha(b_n)\rangle \leq\langle\gamma_n(1_A)\rangle+\langle\beta_n\alpha(b_0-\delta)_+\rangle\nonumber\\
&&+\langle\beta_n\alpha(b_1)
\rangle+\cdots+\langle\beta_n\alpha(b_n)\rangle\nonumber\\
&&\leq\langle d\rangle+\langle\beta_n\alpha(b_0-\delta)_+\rangle\nonumber\\
&&+\langle\beta_n\alpha(b_1)
\rangle+\cdots+\langle\beta_n\alpha(b_n)\rangle\nonumber\\
&&\leq\langle (b_0-\delta/2)_+\rangle
+\langle d\rangle+\langle b_1\rangle+\cdots +\langle b_n\rangle\leq\langle b_0\rangle+\langle b_1\rangle+\cdots +\langle b_n\rangle.\nonumber
\end{eqnarray}

$(\textbf{II})$,  let us  suppose that each $b_i ~(0\leq i\leq n)$ is Cuntz equivalent to a projection  and $a$ is not Cuntz equivalent to a projection.
Choose  projections $p_0,~ p_1,~\cdots,~ p_n$ such that $b_i$  is Cuntz equivalent to $p_i$ for all $0\leq i\leq n.$
We may assume that $b_i=p_i$ for all  $0\leq i\leq n.$

By Theorem \ref {thm:2.1} (2), let $\delta>0$ (with $\delta<\varepsilon$), there is a non-zero positive element $d$ orthogonal to $a$  such that
 $\langle(a-\delta)_++ d\rangle\leq\langle a\rangle$.
Since $(k+1)\langle a\rangle \leq k\langle p_i\rangle,$ we have $(k+1)\langle(a-\delta)_++ d\rangle \leq k\langle p_i\rangle$ for all $0\leq i\leq n.$

Hence, by Theorem \ref {thm:2.1} (1),  as earlier,  there exist  $v_k=(v_{i,j}^k) \in {\rm M}_{k+1}(A), $ $0\leq k\leq n,~1\leq i\leq k+1,~ 1\leq j\leq k+1$, such that
   $$\|v_i{\rm diag}(p_i\otimes 1_{k},  ~0)v_i^*-((a-\delta)_++ d) \otimes1_{k+1}\|<\delta,~ 0\leq i\leq n.$$

Since $A\in {\rm ATA}\mathcal{P},$   for $F=\{a,~ b_0,~ b_1,~\cdots,~ b_n, ~d,~ {v_{i,j}^k}: ~1\leq i\leq k+1,~ 1\leq j\leq k+1, ~0\leq k\leq n\},$  for any  $\varepsilon'>0$,
  there exist
a ${\rm C^*}$-algebra $B$  in $\mathcal{P}$ and completely positive  contractive linear maps  $\alpha:A\to B$ and  $\beta_n: B\to A$, and $\gamma_n:A\to A\cap\beta_n(B)^{\perp}$ such that

$(1)$ the map $\alpha$ is unital  completely positive   linear map, $\beta_n(1_B)$ and $\gamma_n(1_A)$ are all projections $\beta_n(1_B)+\gamma_n(1_A)=1_A$ for all $n\in \mathbb{N}$,

$(2)$ $\|x-\gamma_n(x)-\beta_n(\alpha(x))\|<\varepsilon'$ for all $x\in F$ and for all $n\in {\mathbb{N}}$.

$(3)$ $\alpha$ is an $F$-$\varepsilon'$ approximate embedding,

$(4)$ $\lim_{n\to \infty}\|\beta_n(xy)-\beta_n(x)\beta_n(y)\|=0$ and $\lim_{n\to \infty}\|\beta_n(xy)\|=\|x\|$ for all $x,y\in B$.

Since $\|v_i{\rm diag}(p_i\otimes 1_{k},  ~0)v_i^*-((a-\delta)_++ d) \otimes1_{k+1}\|<\delta,~ 0\leq i\leq n$,
 By $(2)$, $(3)$ and $(4)$, we have
 \begin{eqnarray}
\label{Eq:eq1}
&&\|(\gamma_n\otimes id_{M_{k+1}}(v_i)+\beta_n\otimes id_{M_{k+1}}(\alpha\otimes id_{M_{k}}(v_i)))({\rm diag}(\gamma_n(p_i)\otimes 1_k, ~0) \nonumber\\
&&+{\rm diag}(\beta_n\alpha(p_i)\otimes 1_k, ~0))(\gamma_n\otimes id_{M_{k+1}}({v_i}^*)+\beta_n\otimes id_{M_{k+1}}(\alpha\otimes id_{M_{k+1}}({v_i}^*)))\nonumber\\
&&-(\gamma_n((a-\delta)_++d)\otimes 1_{k+}, +\beta_n\alpha((a-\delta)_++d)\otimes 1_{k+1})\|<\delta.\nonumber
\end{eqnarray}

 By $(1)$, we have
 \begin{eqnarray}
\label{Eq:eq1}
&&\|(\gamma_n\otimes id_{M_{k+1}}(v_i))({\rm diag}(\gamma_n(p_i)\otimes 1_k, ~0))(\gamma_n\otimes id_{M_{k+1}}({v_i}^*))\nonumber\\
&&-\gamma_n((a-\delta)_++d)\otimes 1_{k+1}\|
<\delta.\nonumber
\end{eqnarray}

Since $\|v_i{\rm diag}(p_i\otimes 1_{k},  ~0)v_i^*-((a-\delta)_++ d) \otimes1_{k+1}\|<\delta,~ 0\leq i\leq n$,
by $(1)$ and $(3)$, we have
$$\|\alpha\otimes id_{M_{k+1}}(v)){\rm diag}(\alpha(p_i)\otimes 1_l, ~0)
\alpha\otimes id_{M_{k+1}}({v}^*)-\alpha((a-\delta)_++ d) \otimes1_{k+1}\|
<2\delta,$$
for $0\leq i\leq n$.

By Theorem \ref {thm:2.1} (1), we have
$$(k+1)\langle(\alpha((a-\delta)_++ d)-2\delta)_+\rangle\leq k\langle \alpha(p_i)\rangle$$
for $0\leq i\leq n$.

Since $B\in \Omega$, we have $$\langle(\alpha((a-\delta)_++ d)-2\delta)_+\rangle\leq
\langle \alpha(p_0)\rangle+\langle \alpha(p_1)\rangle+\cdots+\langle \alpha(p_n)\rangle.$$

Since $\langle(\alpha((a-\delta)_++ d)-2\delta)_+\rangle\leq
\langle \alpha(p_0)\rangle+\langle \alpha(p_1)\rangle+\cdots+\langle \alpha(p_n)\rangle,$
there exists $w\in M_{k+1}(B)$ such that
$$\|w {\rm diag}(\alpha(p_0), \alpha(p_1),\cdots, \alpha(p_n))w^*-(\alpha((a-\delta)_++ d)-2\delta)_+\|<\delta.$$

By $(4)$, we have
$$ \|\beta_n\otimes id_{M_{k+1}}(w) {\rm diag}(\beta_n\alpha(p_0), \beta_n\alpha(p_1),\cdots, \beta_n\alpha(p_n)) \beta_n\otimes id_{M_{k+1}}(w^*)$$$$-(\beta_n(\alpha((a-\delta)_++ d)-2\delta)_+)\|<2\delta.$$

By Theorem \ref {thm:2.1} (1), we have
$$\langle(\beta_n(\alpha((a-\delta)_++ d)-6\delta)_+)\rangle\leq \langle(\beta_n\alpha(p_0)-\delta)_+\rangle+\langle\beta_n\alpha(p_1)
\rangle+\cdots+\langle\beta_n\alpha(p_n)\rangle.$$

Since $(a-\delta)_+$  is orthogonal to $d$, by $(3)$ and
$(4)$, we may assume that $\beta_n\alpha((a-\delta)_+)$ is  orthogonal to $\beta_n\alpha(d)$.

With  $G=\{\gamma_n(a), \gamma_n(p_i),~ \gamma_n(v_{i,j}^k): ~1\leq i\leq k+1,~ 1\leq j\leq k+1, 0\leq k\leq n\},$  and any $\varepsilon''>0$,  $E=\gamma_n(1)A\gamma_n(1)$, since $E$ is  asymptotically tracially in $\mathcal{P}$, there exist
a ${\rm C^*}$-algebra $D$  in $\mathcal{P}$ and completely positive  contractive linear maps  $\alpha':E\to D$ and  $\beta_n': D\to E$, and $\gamma_n':E\to E\cap\beta_n'(D)^{\perp}$ such that

$(1')$ the map $\alpha'$ is unital  completely positive   linear map,  $\beta_n'(1_D)$ and $\gamma_n'(1_A)$ are all projections, and  $\beta_n'(1_D)+\gamma_n'(1_A)=1_A$ for all $n\in \mathbb{N}$,

$(2')$ $\|x-\gamma_n'(x)-\beta_n'(\alpha'(x))\|<\varepsilon''$ for all $x\in G$ and for all $n\in {\mathbb{N}}$,

$(3')$ $\alpha'$ is an $F$-$\varepsilon''$ approximate embedding,

$(4')$ $\lim_{n\to \infty}\|\beta_n'(xy)-\beta_n'(x)\beta_n(y)\|=0$ and $\lim_{n\to \infty}\|\beta_n'(xy)\|=\|x\|$ for all $x,y\in D$, and

$(5')$ $\gamma_n'\gamma_n(1)\lesssim \beta_n\alpha(d)$ for all $n\in \mathbb{N}$.

Since
$$\|(\gamma_n\otimes id_{M_{k+1}}(v_i)){\rm diag}(\gamma_n(p_i)\otimes 1_k, ~0)(\gamma_n\otimes id_{M_{k}}({v}^*))-\gamma_n(a)\otimes 1_{k+1}\|<\delta,$$
for $0\leq i\leq n$,
by $(1)'$, we have

$$\|\alpha'\otimes id_{M_{k+1}}\gamma_n\otimes id_{M_{k+1}}(v_i){\rm diag}(\alpha'\gamma_n(p_i)\otimes 1_k, ~0)$$$$
\alpha'\otimes id_{M_{k+1}}(\gamma_n\otimes id_{M_{k+1}}(v_i^*))-\alpha'\gamma_n(a)\otimes 1_{k+1}\|<2\delta.$$

By Theorem \ref {thm:2.1} (1), we  have
$$(k+1)\langle(\alpha'\gamma_n(a)-4\delta)_+\rangle\leq k\langle \alpha'\gamma_n(p_i)\rangle. $$
Since $D\in \Omega$, we have
$$\langle(\alpha'\gamma_n(a)-4\delta)_+\rangle\leq \langle \alpha'\gamma_n(p_0)
\rangle+\alpha'\gamma_n(p_1)
\rangle+\cdots+\alpha'\gamma_n(p_n)
\rangle. $$

Since $\langle(\alpha'\gamma_n(a)-4\delta)_+\rangle\leq \langle (\alpha'\gamma_n(p_0)-\delta)_+
\rangle+(\alpha'\gamma_n(p_1)-\delta)_+
\rangle+\cdots+(\alpha'\gamma_n(p_n)-\delta)_+
\rangle, $
there exists $w\in M_{k+1}(D)$ such that
$$\|w {\rm diag}(\alpha'\gamma_n(p_0), \alpha'\gamma_n(p_1),\cdots, \alpha'\gamma_n(p_n))w^*-(\alpha'\gamma_n((a-\delta)_++ d)-2\delta)_+\|<\delta.$$

By $(4)$, we have
$$ \|\beta_n'\otimes id_{M_{k+1}}(w) {\rm diag}(\beta_n'\alpha'\gamma_n(p_0), \beta_n'\alpha'\beta_n\alpha(p_1),\cdots, \beta_n'\alpha'\beta_n\alpha(p_n)) \beta_n\otimes id_{M_{k+1}}(w^*)$$$$-(\beta_n'\alpha'\beta_n(\alpha((a-\delta-2\delta)_+)\|<2\delta.$$

By Theorem \ref {thm:2.1} (1), we have
$$\|\langle(\beta_n'(\alpha'((a-\delta)_+-2\delta)_+)\rangle\leq \langle\beta_n'\alpha'\gamma_n(p_0)\rangle+\langle\beta_n'\alpha'\gamma_n(p_1)
\rangle+\cdots+\langle\beta_n'\alpha'\gamma_n(p_n)\rangle.$$

 Therefore, we have
 \begin{eqnarray}
\label{Eq:eq1}
&&\langle(a-\varepsilon)_+\rangle \nonumber\\
&&\leq\langle(\gamma_n(a)-10\delta)_+\rangle+\langle (\beta_n\alpha(a-\delta)_+)-8\delta)_+\rangle\nonumber\\
 &&\leq\langle(\gamma_n'\gamma_n(a)-2\delta)_+\rangle+\langle (\beta_n'\alpha'\gamma_n((a-\delta)_+)-8\delta)_+\rangle\nonumber\\
  &&+\langle (\beta_n\alpha((a-\delta)_+)-8\delta)_+\rangle\leq\langle\gamma_n'(1_A)\rangle \nonumber\\
&&+ \langle (\beta_n\alpha((a-\delta)_+)-8\delta)_+\rangle+\langle (\beta_n'\alpha'\gamma_n((a-\delta)_+)-2\delta)_+\rangle\nonumber\\
&&\leq\langle\beta_n\alpha(d)\rangle+
  \langle (\beta_n\alpha((a-\delta)_+)-2\delta)_+\rangle
  +\langle (\beta_n'\alpha'\gamma_n((a-\delta)_+)-2\delta)_+\rangle\nonumber\\
&&\leq\langle(\beta_n\alpha(a)-\delta)_+\rangle+\langle(\beta_n\alpha(p_1)
\rangle+\cdots+\langle(\beta_n\alpha(p_n)\rangle \nonumber\\
&&+\langle (\beta_n'\alpha'\gamma_n(p_0)-\delta)_+\rangle+
\langle (\beta_n'\alpha'\gamma_n(p_1)-\delta)_+\rangle+\cdots+
\langle (\beta_n'\alpha'\gamma_n(p_n)-\delta)_+\rangle\nonumber\\
&&\leq\langle(\beta_n\alpha(p_0)-\delta)_+\rangle+\langle(\beta_n\alpha(p_1)-\delta)_+
\rangle+\cdots+\langle(\beta_n\alpha(p_n)-\delta)_+\rangle \nonumber\\
&&\leq\langle p_0\rangle+\langle p_1\rangle+\cdots +\langle p_n\rangle.\nonumber
\end{eqnarray}

  $(\textbf{III})$, we suppose that both $a$ and $b_i$ ($0\leq i\leq n$) are  all  Cuntz equivalent to projections.

   Choose  projections $p,~ q_i$ such that $a$ is Cuntz equivalent to $p$ and  $b_i$  is Cuntz equivalent to $q_i.$
We may assume that $a=p,~ b_i=q_i.$

Since $(k+1)\langle p\rangle \leq k\langle q_i\rangle$ for all $0\leq i \leq n,$ so for any $\delta>0$,
by Theorem \ref {thm:2.1} (1), there exist $v_k=(v_{i,j}^k), ~ 0\leq k\leq n,~1\leq i\leq k+1,~ 1\leq j\leq k+1$ such that
         $$\|v_i{\rm diag}(q_i\otimes 1_{k},~ 0)v_i^*-p \otimes1_{k+1}\|<\delta.$$

Since $A\in {\rm ATA}\mathcal{P}$, for $F=\{p,~ q_k, (v_{i,j}^k), ~ 0\leq k\leq n,~1\leq i\leq k+1,~ 1\leq j\leq k+1\},$ for  any $\varepsilon'>0,$
  there exist a
a ${\rm C^*}$-algebra $B$  in $\mathcal{P}$ and completely positive  contractive linear maps  $\alpha:A\to B$ and  $\beta_n: B\to A$, and $\gamma_n:A\to A\cap\beta_n(B)^{\perp}$ such that

$(1)$ the map $\alpha$ is unital  completely positive   linear map, $\beta_n(1_B)$ and $\gamma_n(1_A)$ are all projections $\beta_n(1_B)+\gamma_n(1_A)=1_A$ for all $n\in \mathbb{N}$,

$(2)$ $\|x-\gamma_n(x)-\beta_n(\alpha(x))\|<\varepsilon'$ for all $x\in F$ and for all $n\in {\mathbb{N}}$.

$(3)$ $\alpha$ is an $F$-$\varepsilon'$ approximate embedding,

$(4)$ $\lim_{n\to \infty}\|\beta_n(xy)-\beta_n(x)\beta_n(y)\|=0$ and $\lim_{n\to \infty}\|\beta_n(xy)\|=\|x\|$ for all $x,y\in B$.

Since $\|v_i{\rm diag}(q_i\otimes 1_{k},~ 0)v_i^*-p \otimes1_{k+1}\|<\delta$ for $0\leq i\leq n$,
 by $(2)$, we have
 \begin{eqnarray}
\label{Eq:eq1}
&&\|(\gamma_n\otimes id_{M_{k+1}}(v_i)+\beta_n\otimes id_{M_{k+1}}(\alpha\otimes id_{M_{k}}(v_i)))({\rm diag}(\gamma_n(q_i)\otimes 1_k, ~0) \nonumber\\
&&+{\rm diag}(\beta_n\alpha(q_i)\otimes 1_k, ~0))(\gamma_n\otimes id_{M_{k+1}}({v_i}^*)+\beta_n\otimes id_{M_{k+1}}(\alpha\otimes id_{M_{k+1}}({v_i}^*)))\nonumber\\
&&-(\gamma_n(p)\otimes 1_{k+1} +\beta_n\alpha(p)\otimes 1_{k+1})\|\nonumber\\
&&<2\delta.\nonumber
\end{eqnarray}

 By $(1)$, we have
 \begin{eqnarray}
\label{Eq:eq1}
&&\|(\gamma_n\otimes id_{M_{k+1}}(v_i))({\rm diag}(\gamma_n(q_i)\otimes 1_k, ~0))(\gamma_n\otimes id_{M_{k+1}}({v_i}^*))\nonumber\\
&&-\gamma_n(p)\otimes 1_{k+1}\|
<2\delta.\nonumber
\end{eqnarray}

Since $ \|v_i{\rm diag} (q_i\otimes 1_{k}, ~0) v_i^*-p\otimes1_{k+1}\|<\delta$ for $0\leq i\leq n$,
by $(1)$ and $(3)$, we have
$$\|\alpha\otimes id_{M_{k+1}}(v_i){\rm diag}(\alpha(q_i)\otimes 1_k, ~0)
\alpha\otimes id_{M_{k+1}}({v_i}^*)-\alpha(p)\otimes 1_{k+1}\|
<\delta.$$

By Theorem  \ref {thm:2.1} (1), we have
$$(k+1)\langle(\alpha(p)-\delta)_+\rangle\leq k\langle\alpha(q_i)\rangle.$$
Since $B\in \Omega$, we have
$$\langle(\alpha(p)-\delta)_+\rangle\leq \langle\alpha(q_0)\rangle+ \langle\alpha(q_1)\rangle+\cdots\ \langle\alpha(q_n)\rangle.$$
Since$\langle(\alpha(p)-\delta)_+\rangle\leq \langle\alpha(q_0)\rangle+ \langle\alpha(q_1)\rangle+\cdots\ \langle\alpha(q_n)\rangle,$
there exists $w\in M_{k+1}(D)$ such that
$$\|w {\rm diag}(\alpha(p_0), \alpha(p_1),\cdots, \alpha(p_n))w^*-(\alpha((a-\delta)_+)\|<\delta.$$

By $(4)$, we have
$$ \|\beta_n\otimes id_{M_{k+1}}(w) {\rm diag}(\beta_n\alpha(p_0), \beta_n\alpha(p_1),\cdots, \beta_n\alpha(p_n)) \beta_n\otimes id_{M_{k+1}}(w^*)$$$$-(\beta_n(\alpha((a-\delta)_+)-2\delta)_+)\|<2\delta.$$

By Theorem \ref {thm:2.1} (1), we have
$$\langle(\beta_n(\alpha((a-\delta)_+)-2\delta)_+)\rangle\leq \langle\beta_n\alpha(p_0)\rangle+\langle\beta_n\alpha(p_1)
\rangle+\cdots+\langle\beta_n\alpha(p_n)\rangle.$$

 $(\textbf{III.I})$,
if $(\beta_n\alpha (p)-2\delta)_+$ and $\beta_n\alpha(q_i)$  are not Cuntz equivalent to a pure positive element, then there exist projections $p',q'_i$ such that $(\beta_n\alpha (p)-2\delta)_+\sim p'$ and $\beta_n\alpha(q_i)\sim q'_i$, and we suppose that $p'\sim  q_0'+q_1'+\cdots+q_n'$,
 then we have $(k+1)\langle p'\rangle=(k+1)\langle q_0'\rangle+(k+1)\langle q_1'\rangle+\cdots +(k+1)\langle q_n'\rangle
 \leq k\langle q_0'\rangle+k\langle q_1'\rangle+\cdots +k\langle q_n'\rangle,$
 and this
contradicts  the stable finiteness of $A$ (since ${\rm C^*}$-algebras in $\mathcal{P}$ are stably finite (cf.~\ proposition 4.2 in  \cite{FL}).
So there exist a nonzero projection $s\in A$, orthogonal to $p'$  such that
 $$(\beta_n\alpha (p)-2\delta)_++s\lesssim q_0'+q_1'+\cdots+q_n'.$$

With   $G=\{\gamma_n(a), \gamma_n(p_i),~ \gamma_n(v_{i,j}^k): ~1\leq i\leq k+1,~ 1\leq j\leq k+1, 0\leq k\leq n\},$  and any $\varepsilon''>0$, $E=\gamma_n(1)A\gamma_n(1)$, since $E$ is  asymptotically tracially in $\mathcal{P}$,  there exist
a ${\rm C^*}$-algebra $D$  in $\Omega$ and completely positive  contractive linear maps  $\alpha':E\to D$ and  $\beta_n': D\to E$, and $\gamma_n':E\to E\cap\beta_n'(D)^{\perp}$ such that

$(1)'$ the map $\alpha'$ is unital  completely positive   linear map, $\beta_n'(1_D)$ and $\gamma_n'(1_E)$ are all projections, $\beta_n'(1_D)+\gamma_n'(1_E)=1_E$ for all $n\in \mathbb{N}$,

$(2)'$ $\|x-\gamma_n'(x)-\beta_n'(\alpha'(x))\|<\varepsilon''$ for all $x\in G$ and for all $n\in {\mathbb{N}}$,

$(3)'$ $\alpha'$ is an $F$-$\varepsilon''$ approximate embedding,

$(4)'$ $\lim_{n\to \infty}\|\beta_n'(xy)-\beta_n'(x)\beta_n(y)\|=0$ and $\lim_{n\to \infty}\|\beta_n'(xy)\|=\|x\|$ for all $x,y\in D$, and

$(5)'$ $\gamma_n'\gamma_n(1)\lesssim s$ for all $n\in \mathbb{N}$.

Since
$$\|(\gamma_n\otimes id_{M_{k+1}}(v_i)){\rm diag}(\gamma_n(q_i)\otimes 1_k, ~0)(\gamma_n\otimes id_{M_{k}}({v}^*))-\gamma_n(p)\otimes 1_{k+1}\|<2\delta,$$
for $0\leq i\leq n$,
by $(1)'$, we have

$$\|\alpha'\otimes id_{M_{k+1}}\gamma_n\otimes id_{M_{k+1}}(v_i){\rm diag}(\alpha'\gamma_n(q_i)\otimes 1_k, ~0)$$$$
\alpha'\otimes id_{M_{k+1}}(\gamma_n\otimes id_{M_{k+1}}(v_i^*))-\alpha'\gamma_n(p)\otimes 1_{k+1}\|<3\delta.$$

By Theorem \ref {thm:2.1} (1), we  have
$$(k+1)\langle(\alpha'\gamma_n(p)-(n+6)\delta)_+\rangle\leq k\langle (\alpha'\gamma_n(q_i)-\delta)_+
\rangle. $$
Since $D\in \Omega$, we have
$$\langle(\alpha'\gamma_n(p)-(n+6)\delta)_+\rangle\leq \langle (\alpha'\gamma_n(q_0)-\delta)_+
\rangle+\langle(\alpha'\gamma_n(q_1)-\delta)_+
\rangle+\cdots+\langle(\alpha'\gamma_n(q_n)-\delta)_+
\rangle. $$

Since $\langle(\alpha'\gamma_n(p)-(n+6)\delta)_+\rangle\leq \langle (\alpha'\gamma_n(q_0)-\delta)_+
\rangle+\langle(\alpha'\gamma_n(q_1)-\delta)_+
\rangle+\cdots+\langle(\alpha'\gamma_n(q_n)-\delta)_+
\rangle, $
there exists $w\in M_{k+1}(D)$ such that
$$\|w {\rm diag}((\alpha'\gamma_n(q_0)-\delta)_+, (\alpha'\gamma_n(q_1)-\delta)_+,\cdots, (\alpha'\gamma_n(q_n)-\delta)_+)w^*-(\alpha'\gamma_n(p)-(n+3)\delta)_+\|<\delta.$$

By $(4)$, we have
$$ \|\beta_n'\otimes id_{M_{k+1}}(w) {\rm diag}((\beta_n'\alpha'\gamma_n(q_0)-\delta)_+, (\beta_n'\alpha'\gamma_n(q_1)-\delta)_+,\cdots, (\beta_n'\alpha'\gamma_n(q_n)-\delta)_+)$$$$ \beta_n'\otimes id_{M_{k+1}}(w^*)-(\beta_n'(\alpha'\gamma_n(p)-(n+3)\delta)_+\|<2\delta.$$

By Theorem \ref {thm:2.1} (1), we have
$$\langle\beta_n'(\alpha'\gamma_n(p)-(n+10)\delta)_+\rangle\leq \langle(\beta_n'\alpha'\gamma_n(q_0)-\delta)_+\rangle+\langle(\beta_n'\alpha'\gamma_n(q_1)-\delta)_+
\rangle+\cdots+\langle(\beta_n'\alpha'\gamma_n(q_n)-\delta)_+\rangle.$$

Therefore, we have
 \begin{eqnarray}
\label{Eq:eq1}
&&\langle p\rangle \leq\langle(\gamma_n(p)-(n+10)\delta)_+)\rangle+\langle (\beta_n\alpha(p)-(n+10)\delta)_+\rangle\nonumber\\
 &&\leq\langle\gamma_n'\gamma_n(p)\rangle+\langle (\beta_n'\alpha'\gamma_n(p)-(n+12)\delta)_+\rangle+\langle (\beta_n\alpha(p)-3\delta)_+\rangle\leq\langle\gamma_n'\gamma_n(1_A)\rangle \nonumber\\
&&+ \langle (\beta_n\alpha((a-\delta)_+)-3\delta)_+\rangle+\langle (\beta_n'\alpha'\gamma_n((a-(n+8)\delta)_+)-2\delta)_+\rangle\nonumber\\
&&\leq\langle s \rangle+
  \langle (\beta_n\alpha(p)-3\delta)_+\rangle
  +\langle (\beta_n'\alpha'\gamma_n(p)-(n+10)\delta)_+\rangle\nonumber\\
&&\leq\langle(\beta_n\alpha(q_0)-\delta)_+\rangle+\langle(\beta_n\alpha(q_1)-\delta)_+
\rangle+\cdots+\langle(\beta_n\alpha(q_n)-\delta)_+\rangle \nonumber\\
&&+\langle (\beta_n'\alpha'\gamma_n(q_0)-\delta)_+\rangle+
\langle (\beta_n'\alpha'\gamma_n(q_1)-\delta)_+\rangle+\cdots+
\langle (\beta_n'\alpha'\gamma_n(q_n)-\delta)_+\rangle\nonumber\\
&&\leq\langle(\beta_n\alpha(q_0)-\delta)_+\rangle+\langle(\beta_n\alpha(q_1)-\delta)_+
\rangle+\cdots+\langle(\beta_n\alpha(q_n)-\delta)_+\rangle \nonumber\\
&&\leq\langle q_0\rangle+\langle q_1\rangle+\cdots +\langle q_n\rangle.\nonumber
\end{eqnarray}

$(\textbf{III.II})$, We suppose that $(\beta_n\alpha (p)-2\delta)_+$ is a purely positive element, then
By Theorem \ref {thm:2.1} (2),   there is a non-zero positive element $d$ orthogonal to $(\beta_n\alpha (p)-2\delta)_+$  such that
  $$(\beta_n\alpha (p)-3\delta)_++d\lesssim (\beta_n\alpha (p)-2\delta)_+.$$

With  $G=\{\gamma_n(a), \gamma_n(p_i),~ \gamma_n(v_{i,j}^k): ~1\leq i\leq k+1,~ 1\leq j\leq k+1, 0\leq k\leq n\},$  and $\varepsilon''>0$, $E=\gamma_n(1)A\gamma_n(1)$, since $E$ is  asymptotically tracially in $\mathcal{P}$, there exist
a ${\rm C^*}$-algebra $D$  in $\mathcal{P}$ and completely positive  contractive linear maps  $\alpha':E\to D$ and  $\beta_n': D\to E$, and $\gamma_n':E\to E\cap\beta_n'(D)^{\perp}$ such that

$(1)'$ the map $\alpha''$ is unital  completely positive   linear map, $\beta_n'(1_D)$ and $\gamma_n'(1_A)$ are all projections, $\beta_n'(1_D)+\gamma_n'(1_A)=1_A$ for all $n\in \mathbb{N}$,

$(2)'$ $\|x-\gamma_n'(x)-\beta_n'(\alpha'(x))\|<\varepsilon''$ for all $x\in G$ and for all $n\in {\mathbb{N}}$,

$(3)'$ $\alpha'$ is an $F$-$\varepsilon''$ approximate embedding,

$(4)'$ $\lim_{n\to \infty}\|\beta_n'(xy)-\beta_n'(x)\beta_n(y)\|=0$ and $\lim_{n\to \infty}\|\beta_n'(xy)\|=\|x\|$ for all $x,y\in D$, and

$(5)'$ $\gamma_n'\gamma_n(1)\lesssim d$ for all $n\in \mathbb{N}$.

Since
$$\|(\gamma_n\otimes id_{M_{k+1}}(v_i)){\rm diag}(\gamma_n(q_i)\otimes 1_k, ~0)(\gamma_n\otimes id_{M_{k}}({v}^*))-\gamma_n(p)\otimes 1_{k+1}\|<2\delta,$$
for $0\leq i\leq n$,
by $(1)'$, we have

$$\|\alpha'\otimes id_{M_{k+1}}\gamma_n\otimes id_{M_{k+1}}(v_i){\rm diag}(\alpha'\gamma_n(q_i)\otimes 1_k, ~0)$$$$
\alpha'\otimes id_{M_{k+1}}(\gamma_n\otimes id_{M_{k+1}}(v_i^*))-\alpha'\gamma_n(p)\otimes 1_{k+1}\|<3\delta.$$

By Theorem \ref {thm:2.1} (1), we  have
$$(k+1)\langle(\alpha'\gamma_n(p)-(n+6)\delta)_+\rangle\leq k\langle (\alpha'\gamma_n(q_i)-\delta)_+
\rangle. $$
Since $D\in \Omega$, we have
$$\langle(\alpha'\gamma_n(p)-(n+6)\delta)_+\rangle\leq \langle (\alpha'\gamma_n(q_0)-\delta)_+
\rangle+\langle(\alpha'\gamma_n(q_1)-\delta)_+
\rangle+\cdots+\langle(\alpha'\gamma_n(q_n)-\delta)_+
\rangle. $$

Since $\langle(\alpha'\gamma_n(p)-(n+6)\delta)_+\rangle\leq \langle (\alpha'\gamma_n(q_0)-\delta)_+
\rangle+\langle(\alpha'\gamma_n(q_1)-\delta)_+
\rangle+\cdots+\langle(\alpha'\gamma_n(q_n)-\delta)_+
\rangle, $
there exists $w\in M_{k+1}(D)$ such that
$$\|w {\rm diag}((\alpha'\gamma_n(q_0)-\delta)_+, (\alpha'\gamma_n(q_1)-\delta)_+,\cdots, (\alpha'\gamma_n(q_n)-\delta)_+)w^*-(\alpha'\gamma_n(p)-(n+6)\delta)_+\|<\delta.$$

By $(4)$, we have
$$ \|\beta_n'\otimes id_{M_{k+1}}(w) {\rm diag}((\beta_n'\alpha'\gamma_n(q_0)-\delta)_+, (\beta_n'\alpha'\gamma_n(q_1)-\delta)_+,\cdots, (\beta_n'\alpha'\gamma_n(q_n)-\delta)_+) $$$$\beta_n'\otimes id_{M_{k+1}}(w^*)-(\beta_n'(\alpha'\gamma_n(p)-(n+6)\delta)_+\|<2\delta.$$

By Theorem \ref {thm:2.1} (1), we have
$$\langle\beta_n'(\alpha'\gamma_n(p)-(n+10)\delta)_+\rangle\leq \langle(\beta_n'\alpha'\gamma_n(q_0)-\delta)_+\rangle+\langle(\beta_n'\alpha'\gamma_n(q_1)-\delta)_+
\rangle+\cdots+\langle(\beta_n'\alpha'\gamma_n(q_n)-\delta)_+\rangle.$$

Therefore, we have
 \begin{eqnarray}
\label{Eq:eq1}
&&\langle p\rangle \leq\langle(\gamma_n(p)-(n+10)\delta)_+)\rangle+\langle (\beta_n\alpha(p)-(n+10)\delta)_+\rangle\nonumber\\
 &&\leq\langle\gamma_n'\gamma_n(p)\rangle+\langle (\beta_n'\alpha'\gamma_n(p)-(n+12)\delta)_+\rangle+\langle (\beta_n\alpha(p)-3\delta)_+\rangle\leq\langle\gamma_n'\gamma_n(1_A)\rangle \nonumber\\
&&+ \langle (\beta_n\alpha((a-\delta)_+)-3\delta)_+\rangle+\langle (\beta_n'\alpha'\gamma_n((a-(n+8)\delta)_+)-2\delta)_+\rangle\nonumber\\
&&\leq\langle d \rangle+
  \langle (\beta_n\alpha(p)-3\delta)_+\rangle
  +\langle (\beta_n'\alpha'\gamma_n(p)-(n+10)\delta)_+\rangle\nonumber\\
&&\leq\langle(\beta_n\alpha(q_0)-\delta)_+\rangle+\langle(\beta_n\alpha(q_1)-\delta)_+
\rangle+\cdots+\langle(\beta_n\alpha(q_n)-\delta)_+\rangle \nonumber\\
&&+\langle (\beta_n'\alpha'\gamma_n(q_0)-\delta)_+\rangle+
\langle (\beta_n'\alpha'\gamma_n(q_1)-\delta)_+\rangle+\cdots+
\langle (\beta_n'\alpha'\gamma_n(q_n)-\delta)_+\rangle\nonumber\\
&&\leq\langle(\beta_n\alpha(q_0)-\delta)_+\rangle+\langle(\beta_n\alpha(q_1)-\delta)_+
\rangle+\cdots+\langle(\beta_n\alpha(q_n)-\delta)_+\rangle \nonumber\\
&&\leq\langle q_0\rangle+\langle q_1\rangle+\cdots +\langle q_n\rangle.\nonumber
\end{eqnarray}

$(\textbf{III.III})$, we suppose that  $(\beta_n\alpha (p)-2\delta)_+$ is Cuntz equivalent to a projection and there exist some $q_i$  such that $\beta_n\alpha(q_i)$ is not Cuntz equivalent to a projection.
Choose a projection $p_0$ such that$(\beta_n\alpha (p)-2\delta)_+$ is  Cuntz equivalent to $p_0.$
We may assume that $(\beta_n\alpha (p)-2\delta)_+=p_0.$

We may also assume that $q_0$ such that $ \beta_n\alpha(q_0)$ is a pure positive. By Theorem \ref {thm:2.1} (2),
   there exists a non-zero positive element $d$  orthogonal to $ \beta_n\alpha(q_0)$  such that
 $\langle(\beta_n\alpha (q_0)-\delta)_++ d\rangle \leq\langle \beta_n\alpha (q_0)\rangle.$

   With   $G=\{\gamma_n(a), \gamma_n(p_i),~ \gamma_n(v_{i,j}^k): ~1\leq i\leq k+1,~ 1\leq j\leq k+1, 0\leq k\leq n\},$  and any sufficiently small $\varepsilon''>0$, $E=\gamma_n(1)A\gamma_n(1)$, since $E$ is  asymptotically tracially in $\mathcal{P}$, there exist
a ${\rm C^*}$-algebra $D$  in $\mathcal{P}$ and completely positive  contractive linear maps  $\alpha':E\to D$ and  $\beta_n': D\to E$, and $\gamma_n':E\to E\cap\beta_n'(D)^{\perp}$ such that

$(1')$ the map $\alpha'$ is unital  completely positive   linear map, $\beta_n'(1_D)$ and $\gamma_n'(1_A)$ are all projections, $\beta_n'(1_D)+\gamma_n'(1_A)=1_A$ for all $n\in \mathbb{N}$,

$(2')$ $\|x-\gamma_n'(x)-\beta_n'(\alpha'(x))\|<\varepsilon''$ for all $x\in G$ and for all $n\in {\mathbb{N}}$,

$(3')$ $\alpha'$ is an $F$-$\varepsilon''$ approximate embedding,

$(4')$ $\lim_{n\to \infty}\|\beta_n'(xy)-\beta_n'(x)\beta_n(y)\|=0$ and $\lim_{n\to \infty}\|\beta_n'(xy)\|=\|x\|$ for all $x,y\in D$, and

$(5')$ $\gamma_n'(1)\gamma_n\lesssim d$ for all $n\in \mathbb{N}$.

Since
$$\|(\gamma_n\otimes id_{M_{k+1}}(v_i)){\rm diag}(\gamma_n(q_i)\otimes 1_k, ~0)(\gamma_n\otimes id_{M_{k}}({v}^*))-\gamma_n(p)\otimes 1_{k+1}\|<2\delta,$$
for $0\leq i\leq n$,
by $(1)'$, we have

$$\|\alpha'\otimes id_{M_{k+1}}\gamma_n\otimes id_{M_{k+1}}(v_i){\rm diag}(\alpha'\gamma_n(q_i)\otimes 1_k, ~0)$$$$
\alpha'\otimes id_{M_{k+1}}(\gamma_n\otimes id_{M_{k+1}}(v_i^*))-\alpha'\gamma_n(p)\otimes 1_{k+1}\|<3\delta.$$

By Theorem \ref {thm:2.1} (1), we  have
$$(k+1)\langle(\alpha'\gamma_n(p)-(n+6)\delta)_+\rangle\leq k\langle (\alpha'\gamma_n(q_i)-\delta)_+
\rangle. $$
Since $D\in \Omega$, we have
$$\langle(\alpha'\gamma_n(p)-(n+6)\delta)_+\rangle\leq \langle (\alpha'\gamma_n(q_0)-\delta)_+
\rangle+\langle(\alpha'\gamma_n(q_1)-\delta)_+
\rangle+\cdots+\langle(\alpha'\gamma_n(q_n)-\delta)_+
\rangle. $$

Since $\langle(\alpha'\gamma_n(p)-(n+6)\delta)_+\rangle\leq \langle (\alpha'\gamma_n(q_0)-\delta)_+
\rangle+\langle(\alpha'\gamma_n(q_1)-\delta)_+
\rangle+\cdots+\langle(\alpha'\gamma_n(q_n)-\delta)_+
\rangle, $
there exists $w\in M_{k+1}(D)$ such that
$$\|w {\rm diag}((\alpha'\gamma_n(q_0)-\delta)_+, (\alpha'\gamma_n(q_1)-\delta)_+,\cdots, (\alpha'\gamma_n(q_n)-\delta)_+)w^*-(\alpha'\gamma_n(p)-(n+3)\delta)_+\|<\delta.$$

By $(4)$, we have
$$ \|\beta_n'\otimes id_{M_{k+1}}(w) {\rm diag}((\beta_n'\alpha'\gamma_n(q_0)-\delta)_+, (\beta_n'\alpha'\gamma_n(q_1)-\delta)_+,\cdots, (\beta_n'\alpha'\gamma_n(q_n)-\delta)_+) $$$$\beta_n'\otimes id_{M_{k+1}}(w^*)-(\beta_n'(\alpha'\gamma_n(p)-(n+3)\delta)_+\|<2\delta.$$

By Theorem \ref {thm:2.1} (1), we have
$$\langle\beta_n'(\alpha'\gamma_n(p)-(n+10)\delta)_+\rangle\leq \langle(\beta_n'\alpha'\gamma_n(q_0)-\delta)_+\rangle+\langle(\beta_n'\alpha'\gamma_n(q_1)-\delta)_+
\rangle+\cdots+\langle(\beta_n'\alpha'\gamma_n(q_n)-\delta)_+\rangle.$$

Therefore, we have
 \begin{eqnarray}
\label{Eq:eq1}
&&\langle p\rangle \leq\langle(\gamma_n(p)-(n+10)\delta)_+)\rangle+\langle (\beta_n\alpha(p)-(n+10)\delta)_+\rangle\nonumber\\
 &&\leq\langle\gamma_n'\gamma_n(p)\rangle+\langle (\beta_n'\alpha'\gamma_n(p)-(n+12)\delta)_+\rangle+\langle (\beta_n\alpha(p)-3\delta)_+\rangle\leq\langle\gamma_n'\gamma_n(1_A)\rangle \nonumber\\
&&+ \langle (\beta_n\alpha((a-\delta)_+)-3\delta)_+\rangle+\langle (\beta_n'\alpha'\gamma_n((a-(n+8)\delta)_+)-2\delta)_+\rangle\nonumber\\
&&\leq\langle d \rangle+
  \langle (\beta_n\alpha(p)-3\delta)_+\rangle
  +\langle (\beta_n'\alpha'\gamma_n(p)-(n+10)\delta)_+\rangle\nonumber\\
&&\leq\langle(\beta_n\alpha(q_0)-\delta)_+\rangle+\langle(\beta_n\alpha(q_1)-\delta)_+
\rangle+\cdots+\langle(\beta_n\alpha(q_n)-\delta)_+\rangle \nonumber\\
&&+\langle (\beta_n'\alpha'\gamma_n(q_0)-\delta)_+\rangle+
\langle (\beta_n'\alpha'\gamma_n(q_1)-\delta)_+\rangle+\cdots+
\langle (\beta_n'\alpha'\gamma_n(q_n)-\delta)_+\rangle\nonumber\\
&&\leq\langle(\beta_n\alpha(q_0)-\delta)_+\rangle+\langle(\beta_n\alpha(q_1)-\delta)_+
\rangle+\cdots+\langle(\beta_n\alpha(q_n)-\delta)_+\rangle \nonumber\\
&&\leq\langle q_0\rangle+\langle q_1\rangle+\cdots +\langle q_n\rangle.\nonumber
\end{eqnarray}

\end{proof}

\begin{corollary}\label{thm:3.5}
 Let $\mathcal{P}$ be a class of stably finite  unital
${\rm C^*}$-algebras such that for  any $B\in \mathcal{P},$  ${\rm W}(B)$  is  almost unperforated.   Then ${\rm W}(A)$ is  almost unperforated  for  any  simple  unital ${\rm C^*}$-algebra $A\in {\rm TA}\mathcal{P}$.
\end{corollary}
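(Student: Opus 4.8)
The plan is to deduce this as the special case $\beta=1$ of Theorem \ref{thm:3.1}, using the equivalence recorded immediately after the definition of $\beta$-comparison: for a unital ${\rm C^*}$-algebra $C$, the semigroup ${\rm W}(C)$ is almost unperforated if and only if $C$ has $1$-comparison in the sense of Kirchberg and R{\o}rdam. Thus the hypothesis that ${\rm W}(B)$ is almost unperforated for every $B\in\mathcal{P}$ says precisely that every $B\in\mathcal{P}$ has $\beta$-comparison with $\beta=1$, and the conclusion we must reach, that ${\rm W}(A)$ is almost unperforated, is equivalent to $A$ having $1$-comparison. So the entire corollary is a translation of Theorem \ref{thm:3.1} into the language of the Cuntz semigroup.

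First I would observe that ${\rm TA}\mathcal{P}\subseteq {\rm ATA}\mathcal{P}$: a classical tracial approximation by members of $\mathcal{P}$ is promoted to an asymptotic one by repeating each approximating datum as a constant sequence, so every simple unital $A\in{\rm TA}\mathcal{P}$ is also a simple unital member of ${\rm ATA}\mathcal{P}$. Next I would verify that the hypotheses of Theorem \ref{thm:3.1} hold with $\beta=1$: the class $\mathcal{P}$ consists of stably finite unital ${\rm C^*}$-algebras, and by the equivalence above each $B\in\mathcal{P}$ has $1$-comparison, which is exactly $\beta$-comparison for the admissible value $\beta=1$ (the statement of Theorem \ref{thm:3.1} permits any $1\leq\beta<\infty$). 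Applying Theorem \ref{thm:3.1} then yields that $A$ has $\beta$-comparison with $\beta=1$, i.e.\ $1$-comparison, and invoking the equivalence once more gives that ${\rm W}(A)$ is almost unperforated.

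Because the argument is a direct specialization, there is no genuine obstacle beyond confirming that the case $\beta=1$ is truly contained in Theorem \ref{thm:3.1}. The only point warranting care is that the defining inequalities for $\beta$-comparison with $\beta=1$ involve integers $k,l\geq 1$ with $k>l$, and one should check that the chain of estimates in the proof of Theorem \ref{thm:3.1}—in particular the passage from $k\langle\cdot\rangle\le l\langle\cdot\rangle$ to $\langle\cdot\rangle\le\langle\cdot\rangle$ via the $1$-comparison of each $B\in\mathcal{P}$—goes through under this strict inequality, which it does. I would close by remarking that this recovers, as a consequence of Theorem \ref{thm:3.1}, the almost unperforation statement $(5)$ of Fu and Lin mentioned in the introduction.
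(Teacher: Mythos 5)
Your proposal is correct and takes essentially the same route as the paper, whose entire proof is the one-line observation that the corollary is a special case of Theorem \ref{thm:3.1} (and also of Theorem \ref{thm:3.2}), i.e.\ the case $\beta=1$ via the stated equivalence between almost unperforation of ${\rm W}(\cdot)$ and $1$-comparison. Your additional care about the inclusion ${\rm TA}\mathcal{P}\subseteq{\rm ATA}\mathcal{P}$ (the statement's ``${\rm TA}\mathcal{P}$'' is almost certainly a slip for ``${\rm ATA}\mathcal{P}$'', and the inclusion itself is a result of Fu and Lin, not proved in this paper) goes beyond what the paper records but is consistent with it.
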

\begin{proof} This is a special case of  Theorem \ref{thm:3.1} and also of  Theorem \ref{thm:3.2}.
\end{proof}

 \end{document}